 \newcommand{\bea}{\begin{eqnarray}}
\newtheorem{thm}{Theorem}[section]
 \newtheorem{lem}[thm]{Lemma}
\newtheorem{rem}[thm]{Remarks}
\newtheorem{discu}{Discussion:}
\newtheorem{conje}{Conjecture:}
\begin{document}
\begin{center}
\vspace{1cm}
 {\bf \large Moore-Penrose inverses of  Gram matrices Leaving a Cone Invariant in an Indefinite Inner Product Space}
 \vskip 1cm
  {\bf K. Appi Reddy, T. Kurmayya}\\
                        Department of Mathematics\\
                        National Institute of Technology Warangal\\
                                Warangal - 506 004, India. \\
\end{center}

\begin{abstract}
   In this paper we characterize Moore-Penrose inverses of Gram matrices leaving a cone invariant in an indefinite inner product space using indefinite matrix
multiplication.  This characterization includes  the acuteness (or
obtuseness) of
 certain closed convex cones.
\end{abstract}

\vspace{1cm} {\bf Keywords:}\, \, Gram matrix; Moore-Penrose
inverse; acute cones; Indefinite inner product space. \, \,

\vspace{.2cm} {\bf AMS Subject Classification:}\,\,\,\, 46C20, 15A09.\\
\thispagestyle{empty}

\thispagestyle{empty}

\newpage
\section{Introduction}

An indefinite inner product in $\mathbb{C}^{n}$ is a conjugate
symmetric sesquilinear form $[x, y]$ together with the regularity
condition that $[x, y]=0$ $ \forall y \in \mathbb{C}^{n}$ holds only
when $x = 0$.  Associated with any indefinite inner product, there
exists a unique invertible hermitian matrix $N\in
\mathbb{C}^{n\times n}$ (called a weight) such that $[x, y] =
\langle x, Ny\rangle, $ where $\langle.,.\rangle$ denotes the
Euclidean inner product on $\mathbb{C}^{n}$ and vice versa.
Motivated by the notion of Minkowski space (as studied by
physicists),  we also make an additional assumption on N, namely,
$N^{2} = I$.  It should be remarked that this assumption also allows
us to compare our results with the Euclidean case,  apart from
allowing us to present the results with much algebraic ease.

Investigations of linear maps on indefinite inner product spaces
employ the usual multiplication of matrices which is induced by the
Euclidean inner product of vectors (See for instance \cite{jb}).
This causes a problem as there are two different values for the dot
product of vectors.  To overcome  this difficulty;  Kamaraj,
Ramanathan and Sivakumar  introduced a new matrix product called
indefinite matrix multiplication and investigated some of its
properties in \cite{krkkkcs}.  More precisely,  the indefinite
matrix product of two matrices A and B of sizes $m \times n$ and $n
\times l$ complex matrices,  respectively,  is defined to be the
matrix $A\circ B := ANB$. The adjoint of A,  denoted by $A^{[*]}$,
is defined to be the matrix $NA^{*}M$,  where N and M are weights in
the appropriate spaces. Many properties of this product are similar
to that of the usual matrix product (refer \cite{krkkkcs}).
Moreover,  it not only rectifies the difficulty indicated earlier,
but also enables us to recover some interesting results in
indefinite inner product spaces in a manner analogous to that of the
Euclidean case. Kamaraj, Ramanathan and Sivakumar  \cite{krkkkcs}
also shown that in the setting of indefinite inner product spaces,
Moore-Penrose inverses of certain matrices do not exist with respect
to the usual matrix product where as Moore-Penrose inverses of such
matrices exist with respect to the indefinite matrix product. Hence
they concluded that indefinite matrix product is more appropriate
than the usual matrix product. 

     The problem of nonnegative invertibility of matrices (or inverses of  matrices leaving a cone invariant)  was first studied by Collatz \cite{cz2} when he
applied a finite difference method for solving a class of two point
boundary value problems.  This idea of nonnegative invertibility has
undergone
 a plethora of generalizations over the years. We refer the reader \cite{bp2} (and the references cited there in)
 for a detailed survey of these extensions.

 In recent years, nonnegative invertibility of Gram matrices has received, a lot
of attention. This has been primarily motivated by applications in
convex optimization problems. In this connection, there is a well
known result that characterizes non negative invertibility of Gram
matrices in terms of obtuseness or acuteness of certain polyhedral
cones. (See for instance Lemma 1.6 in \cite{ce}).
Recently,  Sivakumar \cite{s2gram} characterized Moore-Penrose
inverses of Gram operators leaving a cone invariant over Hilbert
spaces. In this paper, we follow the approach of Sivakumar
\cite{s2gram} and discuss the  Moore-Penrose inverses of Gram
matrices leaving a cone invariant in an indefinite inner product
space using indefinite matrix product. As the indefinite matrix
product encompasses the Euclidean case as a particular example, it
follows that earlier results in the finite dimensional Euclidean
spaces, are easy corollaries of our main result.

The paper is organized as follows. In section 2, we introduce basic
notations, definitions and results. In section 3, we prove series of
lammas and derive the main theorem.

\section{Notations,  Definitions and Preliminaries}

 In this section, we  introduce notations, definitions and basic results  that will be used in
the rest of the paper.

Let $\langle.,.\rangle$ denote the usual Euclidean inner product in
$\mathbb{R}^{n}$. An indefinite inner product is denoted by
$[x,y]=\langle x,Ny\rangle$, where $N\in \mathbb{R}^{n\times n}$ and
$N=N^{-1}$. Such a matrix $N$ is called weight. A space with an
indefinite inner product is called an indefinite inner product
space. In the rest of the paper $\mathbb{R}^{m}, \mathbb{R}^{n}$
denote indefinite inner product spaces with weights $M,N$
respectively. Let $A,B$ be two real matrices of order $m \times n$
and $n \times l$ respectively, then the indefinite matrix product of
those matrices be denoted by $A\circ B$ and defined as $A\circ
B=ANB$, where $N$ is a weight matrix as defined earlier. For $A\in
\mathbb{R}^{m\times n}$, the adjoint $A^{[*]}$,  of $A$ is defined
by $A^{[*]}=NA^{*}M$, where * denotes the transpose of $A$, $M$ and
$N$ are weights of order m and n respectively.

 Let $K$ be a subset
of $\mathbb{R}^n$.  Then $K$ is called cone if (i) $x, y\in
K\Rightarrow x+y\in K$ and  (ii)$x\in K$, $\alpha\in \mathbb{R}$,
$\alpha\geq0\Rightarrow \alpha x\in K$. The dual of cone  $K$ is
denoted by $K^{[*]}$ and is defined as $K^{[*]}=\{x\in
\mathbb{R}^n:[x, t]\geq0,~ \text{for all}~ t\in K\}$. Let
$K^{[*][*]}$ denote $({K^{[*]}})^{[*]}$. If $K=\mathbb{R}^n_{+}$
then $K^{[*]}=I\circ \mathbb{R}^n_{+}$ and $K^{[*][*]}=K$.

A cone $C$ is said to be acute if $[x, y]\geq0$ for all $x, y\in C$.
$C$ is said to be obtuse if $C^{[*]}\cap\{cl~span~C\}$ is acute. In
particular, let $C=A\circ I\circ K$ then we say that $C=\{A\circ
I\circ x:x\in K\}$ is obtuse if $(A\circ I\circ
K)^{[*]}\cap\mathcal{R}(A\circ I)$ is acute. According to Novikoff,
the acuteness of a cone $C$ in $\mathbb{R}^n$ is defined by the
inclusion $C\subseteq C^{*}$. We can easily verify this condition in
indefinite inner product spaces as $C\subseteq C^{[*]}$.

For $A\in \mathbb{R}^{m\times n}$,  $A^{[*]}\circ A$
will be called the Gram matrix of $A$. For $A\in \mathbb{R}^{m\times n}$, the following equations are known to have unique solution \cite{krkkkcs}:\\
$A\circ X\circ  A=A$, $X\circ  A\circ  X=X$, $(A\circ
X)^{[*]}=A\circ X$, $(X\circ  A)^{[*]}=X\circ  A$. Such an $X$ will
be denoted by $A^{[\dagger]}$. If the weight matrices in indefinite
inner product spaces are equal to identity then
$A^{[\dagger]}=A^{\dagger}$. We refer the reader \cite{bg} (and the
references cited there in)
 for a detailed  study of
$A^{\dagger}$.

Next, we collect some properties of $A^{[\dagger]}$. Some of these
have been proved in \cite{krkkkcs} and rest can be demonstrated
easily. The range space of $A$, $\mathcal{R}(A)$ is defined by
$\mathcal{R}(A)=\{y\in \mathbb{R}^m: y=A\circ  x, x\in
\mathbb{R}^n\}$ and the null space  of $A$, $\mathcal{N}(A)$ is
defined by $\mathcal{N}(A)=\{x\in\mathbb{R}^n: A\circ x=0\}$. For
$A\in \mathbb{R}^{m\times n}$,
 $A^{[\dagger]}$ also satisfies the following properties:
 $\mathcal{R}(A^{[*]})=\mathcal{R}(A^{[\dagger]})$,
$\mathcal{N}(A^{[*]}) =\mathcal{N}(A^{[\dagger]})$, $A\circ
A^{[\dagger]}=P_{\mathcal{R}(A)}$, $A^{[\dagger]}\circ
A=P_{\mathcal{R}(A^{[*]})}$. We also have $(A^{[*]}\circ
A)^{[\dagger]}\circ (A^{[*]}\circ A)=P_{\mathcal{R}(A^{[*]}\circ
A)^{[\dagger]}}=P_{\mathcal{R}(A^{[*]}\circ
A)^{[*]}}=P_{\mathcal{R}(A^{[*]})} =A^{[\dagger]}\circ A$.
\begin{lem}
Let $A\in \mathbb{C}^{m\times n}$. Then\\
(i)$A^{[*]}=A^{[*]}\circ A\circ A^{[\dagger]}=A^{[\dagger]}\circ A\circ A^{[*]}$\\
(ii)$A^{[\dagger]}=A^{[*]}\circ (A\circ A^{[*]})^{[\dagger]}=(A^{[*]}\circ A)^{[\dagger]}\circ A^{[*]}$\\
(iii)$A^{[\dagger]}\circ (A^{[\dagger]})^{[*]}=(A^{[*]}\circ A)^{[\dagger]}$\\
(iv)$(A\circ I)^{[\dagger]}=I\circ A^{[\dagger]}$\\
(v)$\mathcal{R}(A\circ A^{[\dagger]})=\mathcal{R}(A)$,
$\mathcal{R}(A^{[\dagger]}\circ A)=\mathcal{R}(A^{[*]})$,
$\mathcal{N}(A\circ A^{[\dagger]})=\mathcal{N}(A^{[*]})$,
$\mathcal{N}(A^{[\dagger]}\circ A)=\mathcal{N}(A)$ where
$\mathcal{R}(X)$ and $\mathcal{N}(X)$  denote the range and null
spaces of X respectively.
\end{lem}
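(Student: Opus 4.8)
\emph{Proof proposal.}
The plan is to treat all five identities as indefinite-product analogues of classical Moore--Penrose formulas and to derive them from three structural facts about $\circ$ and $[*]$: associativity of $\circ$, the involutive anti-homomorphism $(P\circ Q)^{[*]}=Q^{[*]}\circ P^{[*]}$ together with $(A^{[*]})^{[*]}=A$, and the four defining Penrose equations for $A^{[\dagger]}$ (with the self-adjointness of $A\circ A^{[\dagger]}$, of $A^{[\dagger]}\circ A$, and of $A^{[*]}\circ A$). For the one place where a positivity argument would be used in the Euclidean theory, I would fall back on the identity $A\circ A^{[*]}=ANA^{[*]}=AA^{*}M$ (using $N^{2}=I$) to reduce to ordinary matrix products, where the classical facts $\mathcal{R}(AA^{*})=\mathcal{R}(A)$ and $\mathcal{N}(AA^{*})=\mathcal{N}(A^{*})$ are available.

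For (i), I would write $A^{[*]}\circ A\circ A^{[\dagger]}=A^{[*]}\circ(A\circ A^{[\dagger]})^{[*]}$ using self-adjointness of $A\circ A^{[\dagger]}$, then push the adjoint through by the anti-homomorphism to reach $(A\circ A^{[\dagger]}\circ A)^{[*]}$, which collapses to $A^{[*]}$ by the first Penrose equation; the second equality follows symmetrically from self-adjointness of $A^{[\dagger]}\circ A$. For (iii), I would substitute the formula from (ii), namely $A^{[\dagger]}=(A^{[*]}\circ A)^{[\dagger]}\circ A^{[*]}$, into $A^{[\dagger]}\circ(A^{[\dagger]})^{[*]}$; since $A^{[*]}\circ A$ and hence $(A^{[*]}\circ A)^{[\dagger]}$ are self-adjoint, the middle collapses to $A^{[*]}\circ A$ and, writing $H=A^{[*]}\circ A$, the whole expression reduces to $H^{[\dagger]}\circ H\circ H^{[\dagger]}=H^{[\dagger]}$ by the second Penrose equation applied to the Gram matrix.

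For (ii) and (iv) I would invoke uniqueness of $A^{[\dagger]}$: exhibit the candidate and verify the four $\circ$-Penrose equations. For (iv) this is bookkeeping, using that $N$ is the $\circ$-identity and $N^{2}=I$, so that $(A\circ I)\circ(I\circ A^{[\dagger]})=A\circ A^{[\dagger]}$ and the equations for $A\circ I$ reduce to those for $A$. The main obstacle is (ii): verifying $A\circ X\circ A=A$ for $X=A^{[*]}\circ(A\circ A^{[*]})^{[\dagger]}$ gives $A\circ X=A\circ A^{[*]}\circ(A\circ A^{[*]})^{[\dagger]}=P_{\mathcal{R}(A\circ A^{[*]})}$, so the identity needs $\mathcal{R}(A)\subseteq\mathcal{R}(A\circ A^{[*]})$, equivalently $\mathcal{N}(A\circ A^{[*]})=\mathcal{N}(A^{[*]})$. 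Because the indefinite form is not definite, the usual implication $[x,A\circ A^{[*]}\circ x]=0\Rightarrow A^{[*]}\circ x=0$ is unavailable; here I would use $A\circ A^{[*]}=AA^{*}M$ to transfer the problem to the Euclidean Gram matrix $AA^{*}$, conclude $A^{*}(MNx)=0$, and hence $A^{[*]}\circ x=NA^{*}(MNx)=0$.

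Finally, (v) follows quickly. The range identities are immediate from the projector facts recorded before the lemma: $A\circ A^{[\dagger]}=P_{\mathcal{R}(A)}$ gives $\mathcal{R}(A\circ A^{[\dagger]})=\mathcal{R}(A)$, and $A^{[\dagger]}\circ A=P_{\mathcal{R}(A^{[*]})}$ gives $\mathcal{R}(A^{[\dagger]}\circ A)=\mathcal{R}(A^{[*]})$. For the null spaces I would use the cancellation built into the Penrose equations: $A\circ A^{[\dagger]}\circ x=0$ implies, after applying $A^{[\dagger]}\circ$ and using $A^{[\dagger]}\circ A\circ A^{[\dagger]}=A^{[\dagger]}$, that $A^{[\dagger]}\circ x=0$, so $\mathcal{N}(A\circ A^{[\dagger]})=\mathcal{N}(A^{[\dagger]})=\mathcal{N}(A^{[*]})$ (the last equality already noted in the preliminaries); symmetrically $\mathcal{N}(A^{[\dagger]}\circ A)=\mathcal{N}(A)$ via the first Penrose equation.
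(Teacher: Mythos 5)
Your proposal is correct, but there is nothing in the paper to measure it against: the authors give no proof of this lemma at all, stating it in the preliminaries with the remark that some parts are proved in \cite{krkkkcs} and the rest ``can be demonstrated easily.'' The route implicit in that citation is a wholesale reduction to the Euclidean theory via the explicit formula $A^{[\dagger]}=NA^{\dagger}M$: since $M$ and $N$ are Hermitian involutions they are unitary, and conjugating by the weights converts each $\circ$-Penrose equation into a classical Moore--Penrose equation, after which (i)--(v) are the standard identities dressed up. Your argument is genuinely different and, to my mind, more instructive: you work axiomatically from associativity of $\circ$, the anti-homomorphism $(P\circ Q)^{[*]}=Q^{[*]}\circ P^{[*]}$ with $(A^{[*]})^{[*]}=A$, and the four $\circ$-Penrose equations (plus uniqueness), and you invoke the Euclidean reduction exactly once --- at the one point where definiteness genuinely matters, namely establishing $\mathcal{N}(A\circ A^{[*]})=\mathcal{N}(A^{[*]})$, hence $\mathcal{R}(A)\subseteq\mathcal{R}(A\circ A^{[*]})$, for part (ii). You correctly identify why the usual shortcut is unavailable: in an indefinite space $[x,A\circ A^{[*]}\circ x]=0$ does not force $A^{[*]}\circ x=0$, so transferring to the ordinary Gram matrix $AA^{*}$ is the right fix. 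What your approach buys is a proof that works verbatim in any setting where the $\circ$-Penrose system is uniquely solvable; what the conjugation approach buys is brevity, since every item falls out of the classical theory in one line.

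Two small repairs, neither affecting the conclusions. First, a weight-bookkeeping slip in (ii): since $A\circ A^{[*]}=AA^{*}M$ acts on $\mathbb{R}^{m}$ through the weight $M$, the computation should read $A\circ A^{[*]}\circ x=AA^{*}MMx=AA^{*}x$ and $A^{[*]}\circ x=NA^{*}MMx=NA^{*}x$; your expression $NA^{*}(MNx)$ does not typecheck when $m\neq n$ ($M$ is $m\times m$, $N$ is $n\times n$), though the intended chain $AA^{*}x=0\Rightarrow A^{*}x=0\Rightarrow A^{[*]}\circ x=0$ is exactly right. Second, in (v) you quote $\mathcal{N}(A^{[\dagger]})=\mathcal{N}(A^{[*]})$ from the paper's preliminaries; if you want the lemma self-contained, note it follows from your own parts (i) and (ii): $A^{[*]}=A^{[*]}\circ A\circ A^{[\dagger]}$ gives $\mathcal{N}(A^{[\dagger]})\subseteq\mathcal{N}(A^{[*]})$, and $A^{[\dagger]}=(A^{[*]}\circ A)^{[\dagger]}\circ A^{[*]}$ gives the reverse inclusion.
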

We use the following lemma frequently in this paper.
\begin{lem}\label{lineq}
Let $A\in \mathbb{R}^{m\times n}$ and $b\in \mathbb{R}^m$. Then, the
linear equation $A\circ X=b$ has a solution iff $b\in \mathcal{R}(A)
$. In this case, the general solution is given by
$x=A^{[\dagger]}\circ b+z$ where $z\in \mathcal{N(A)}$.
\end{lem}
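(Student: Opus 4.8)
The plan is to establish the equivalence by proving each implication separately and then to pin down the full solution set, leaning throughout on the associativity of the indefinite product $\circ$ and on the four defining identities for $A^{[\dagger]}$ recorded in the preliminaries. Necessity is immediate: if $A\circ X=b$ has a solution $x_0$, then $b=A\circ x_0$ belongs to $\mathcal{R}(A)$ by the very definition $\mathcal{R}(A)=\{A\circ x:x\in\mathbb{R}^n\}$.

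For sufficiency, I would assume $b\in\mathcal{R}(A)$ and write $b=A\circ w$ for some $w\in\mathbb{R}^n$. The candidate solution is $x_0:=A^{[\dagger]}\circ b$. To verify it, I would use that $\circ$ is associative---which follows directly from $A\circ B=ANB$, since $(A\circ B)\circ C=ANBNC=A\circ(B\circ C)$---together with the first Penrose identity $A\circ A^{[\dagger]}\circ A=A$, to compute $A\circ x_0=A\circ A^{[\dagger]}\circ(A\circ w)=(A\circ A^{[\dagger]}\circ A)\circ w=A\circ w=b$. This produces an explicit solution whenever $b\in\mathcal{R}(A)$.

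To obtain the general solution, I would take an arbitrary solution $x$ and observe that $A\circ(x-x_0)=b-b=0$, so that $z:=x-x_0\in\mathcal{N}(A)$ and hence $x=A^{[\dagger]}\circ b+z$ with $z\in\mathcal{N}(A)$. Conversely, for any $z\in\mathcal{N}(A)$ the vector $A^{[\dagger]}\circ b+z$ satisfies $A\circ(A^{[\dagger]}\circ b+z)=b+A\circ z=b$, using the sufficiency computation and $A\circ z=0$. This shows the solution set is exactly $\{A^{[\dagger]}\circ b+z:z\in\mathcal{N}(A)\}$, as claimed.

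The argument is essentially routine and mirrors the classical Euclidean identity $x=A^{\dagger}b+z$ with the ordinary matrix product replaced by $\circ$. I do not expect a substantive obstacle; the only points demanding care are the bookkeeping of associativity across the triple products under $\circ$ and the correct invocation of the Penrose equation $A\circ A^{[\dagger]}\circ A=A$ in place of the Euclidean $AA^{\dagger}A=A$.
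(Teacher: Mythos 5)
Your proof is correct, and it is the standard argument: the paper itself states this lemma without proof (it is quoted as a known fact, essentially from the cited work of Ramanathan, Kamaraj and Sivakumar), and your route via the Penrose identity $A\circ A^{[\dagger]}\circ A=A$ together with associativity of $\circ$ and linearity of $x\mapsto A\circ x$ is exactly the argument being taken for granted. One cosmetic caution: in your associativity check the two interior weights need not coincide --- for $A\in\mathbb{R}^{m\times n}$, $B\in\mathbb{R}^{n\times l}$, $C\in\mathbb{R}^{l\times p}$ one has $(A\circ B)\circ C=ANB\,L\,C=A\circ(B\circ C)$ with $L$ the weight on $\mathbb{R}^{l}$ (so in $A\circ A^{[\dagger]}\circ A$ the products unfold as $ANA^{[\dagger]}MA$, not $ANA^{[\dagger]}NA$), though this does not affect the validity of any step.
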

%
\section{Main Results}
   For given $A\in \mathbb{R}^{m\times n}$,  Ramanathan and Sivakumar
\cite{krksgram} derived a set of necessary and sufficient conditions
for a cone to be invariant under $(A^{[*]}\circ A)^{[\dagger]}.$
These conditions include pairwise acuteness (or pairwise obtuseness)
of certain cones. In this article, we avoid pairwise acuteness of
cones and characterize Moore-Penrose inverses of Gram matrices
leaving a cone invariant in the approach of Sivakumar \cite{s2gram}.
These results generalize the existing results of Sivakumar
\cite{s2gram} in the finite dimensional setting from Euclidean
spaces to indefinite inner product spaces. First we prove series of
lemmas that lead up to the main theorem (Theorem \ref{main}).

As mentioned earlier;  $\mathbb{R}^m, \mathbb{R}^n$ denote
indefinite inner product spaces with weights $M,N$ respectively. Let
$A\in\mathbb{R}^{m\times n}$ be such that $I\circ A=A\circ I$ that
is $MA=AN$ and let $K$ be a closed cone in $\mathbb{R}^n$.
\begin{lem}\label{lem1}
$[A\circ x, y]=[x, A^{[*]}\circ y]$.
\end{lem}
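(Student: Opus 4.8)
The plan is to unwind both sides into ordinary Euclidean inner products and ordinary matrix products, and then to collapse the surplus weight factors using the hypotheses $M^{2}=I$, $N^{2}=I$, and the standing assumption $MA=AN$. Throughout I work over the reals, so that $\langle u,v\rangle=u^{T}v$ and the (symmetric) weights satisfy $M^{T}=M$, $N^{T}=N$; I also use that the adjoint is $A^{[*]}=NA^{*}M=NA^{T}M$.

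First I would expand the left-hand side. Since $A\circ x=ANx$ lives in $\mathbb{R}^{m}$ and the inner product there carries the weight $M$,
\begin{equation*}
[A\circ x,\,y]=\langle A\circ x,\,My\rangle=\langle ANx,\,My\rangle=x^{T}NA^{T}My.
\end{equation*}
Next I would expand the right-hand side. Because $A^{[*]}=NA^{T}M$ is $n\times m$ and $y\in\mathbb{R}^{m}$, the product $A^{[*]}\circ y$ carries the weight $M$, so $A^{[*]}\circ y=A^{[*]}My=NA^{T}M^{2}y=NA^{T}y$ by $M^{2}=I$. Feeding this into the inner product on $\mathbb{R}^{n}$ (weight $N$) and using $N^{2}=I$ gives
\begin{equation*}
[x,\,A^{[*]}\circ y]=\langle x,\,N(NA^{T}y)\rangle=\langle x,\,A^{T}y\rangle=x^{T}A^{T}y.
\end{equation*}

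It then remains to reconcile the two expressions, that is, to show $x^{T}NA^{T}My=x^{T}A^{T}y$, and this is exactly where the hypothesis enters. Transposing $MA=AN$ and using $M^{T}=M$, $N^{T}=N$ yields $A^{T}M=NA^{T}$; substituting this into the left-hand computation gives
\begin{equation*}
x^{T}N(A^{T}M)y=x^{T}N(NA^{T})y=x^{T}N^{2}A^{T}y=x^{T}A^{T}y,
\end{equation*}
which matches the right-hand side and completes the proof.

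I expect no genuine difficulty here: the content is entirely bookkeeping. The one point demanding care is tracking which weight, $M$ or $N$, is inserted by each occurrence of $\circ$, the convention being that $\circ$ attaches the weight of the space over which the contraction is taken (so $A\circ x$ uses $N$, whereas $A^{[*]}\circ y$ uses $M$). After that, the identities $M^{2}=N^{2}=I$ together with the commutation relation $MA=AN$ do all the work. The assumption $I\circ A=A\circ I$ is therefore not cosmetic: without it the stray factor $NA^{T}M$ would not reduce to $A^{T}$, and the adjoint relation would fail.
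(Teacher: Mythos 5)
Your proof is correct and follows essentially the same route as the paper's: both unwind the indefinite products into Euclidean inner products and use $M^{2}=N^{2}=I$ together with the standing hypothesis $MA=AN$. The only cosmetic difference is that the paper channels that hypothesis through the identity $I\circ A\circ I=A$ and the adjoint-reversal rule, whereas you apply its transpose $A^{T}M=NA^{T}$ directly at the raw matrix level.
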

\begin{proof}
$[A\circ x, y]=\langle A\circ x, My\rangle=\langle ANx,
My\rangle=\langle x, NA^{*}My\rangle=[x, A^{*}My]= [x, I\circ
(NA^{*}M)\circ I\circ y]=[x, I\circ A^{[*]}\circ I\circ y]=[x,
(I\circ A\circ I)^{[*]}\circ y]=[x, A^{[*]}\circ y]. $
\end{proof}
\begin{lem}\label{lemma2}
 $u\in (A\circ I\circ K)^{[*]}\Rightarrow(A\circ I)^{[*]}\circ u\in {K}^{[*]}$.
\end{lem}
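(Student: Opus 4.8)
The plan is to unwind the definition of the dual cone and then transfer the operator $A\circ I$ across the indefinite inner product by means of the adjoint identity in Lemma \ref{lem1}. By definition, $u\in(A\circ I\circ K)^{[*]}$ means precisely that $[u,\,A\circ I\circ x]\geq 0$ for every $x\in K$, where I read $A\circ I\circ x$ as $(A\circ I)\circ x$ using associativity of the indefinite product. The goal $(A\circ I)^{[*]}\circ u\in K^{[*]}$ unwinds to $[(A\circ I)^{[*]}\circ u,\,x]\geq 0$ for every $x\in K$. Thus it suffices to prove the single scalar identity $[u,\,(A\circ I)\circ x]=[(A\circ I)^{[*]}\circ u,\,x]$ and then simply read off the sign.

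To establish this identity I would apply Lemma \ref{lem1} with $A$ replaced by the matrix $B:=A\circ I$. First I would check that $B$ inherits the standing hypothesis $I\circ B=B\circ I$: since $I\circ A=A\circ I$ gives $MAN=A$, one computes $I\circ B=MB=M(AN)=MAN=A$ and $B\circ I=BN=(AN)N=A$, so indeed $I\circ B=B\circ I$ and Lemma \ref{lem1} is available for $B$. The lemma then yields $[B\circ x,\,u]=[x,\,B^{[*]}\circ u]$. Because the weights $M,N$ are real and symmetric, the indefinite inner product is symmetric, so $[B\circ x,\,u]=[u,\,B\circ x]$ and $[x,\,B^{[*]}\circ u]=[B^{[*]}\circ u,\,x]$; combining these gives exactly $[u,\,B\circ x]=[B^{[*]}\circ u,\,x]$, which is the desired identity.

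With the identity in hand the conclusion is immediate: for each $x\in K$ the hypothesis gives $[u,\,(A\circ I)\circ x]\geq 0$, hence $[(A\circ I)^{[*]}\circ u,\,x]\geq 0$, and since $x\in K$ was arbitrary this is precisely the statement that $(A\circ I)^{[*]}\circ u\in K^{[*]}$.

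The only genuinely delicate point is the transfer of Lemma \ref{lem1} to $A\circ I$: I must verify that $A\circ I$ satisfies the commutation hypothesis $I\circ(A\circ I)=(A\circ I)\circ I$ rather than assume it, since Lemma \ref{lem1} is proved under exactly that standing assumption (it is what makes $I\circ A\circ I=A$). Everything else is a routine use of the symmetry of the real indefinite form together with the definition of the dual cone.
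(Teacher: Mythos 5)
Your proposal is correct and follows essentially the same route as the paper: the paper's proof likewise takes $r\in K$ and computes $0\leq[u,A\circ I\circ r]=[(A\circ I)^{[*]}\circ u,r]$ by Lemma \ref{lem1}, concluding directly. The only difference is that you explicitly verify the commutation hypothesis $I\circ(A\circ I)=(A\circ I)\circ I$ (via $MAN=A$) and the symmetry of the real form before invoking Lemma \ref{lem1} for $A\circ I$ --- details the paper leaves implicit, and which you check correctly.
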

\begin{proof}
Let $u\in (A\circ I\circ K)^{[*]}$ and $r\in {K}$.  Then $0\leq [u,
A\circ I\circ r]=[(A\circ I)^{[*]}\circ u, r]$,   by Lemma
 \ref{lem1}.
Thus $(A\circ I)^{[*]}\circ u\in K^{[*]}$.
\end{proof}
Next, we show that $K$ is invariant under $A^{[\dagger]}\circ A$ if
and only if $K^{[*]}$ is invariant under $A^{[\dagger]}\circ A$.
\begin{lem}\label{AKdualAK}
$A^{[\dagger]}\circ A\circ K\subseteq K \Leftrightarrow
A^{[\dagger]}\circ A\circ K^{[*]}\subseteq K^{[*]}$.
\end{lem}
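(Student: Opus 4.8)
The plan is to reduce everything to the single map $P:=A^{[\dagger]}\circ A$ and to exploit its two structural features recorded in the preliminaries: it is the orthogonal projector $P_{\mathcal{R}(A^{[*]})}$, so that $P\circ P=P$, and it is self-adjoint, $P^{[*]}=P$ (this is exactly the fourth Moore--Penrose equation $(A^{[\dagger]}\circ A)^{[*]}=A^{[\dagger]}\circ A$). The assertion then reads $P\circ K\subseteq K\Leftrightarrow P\circ K^{[*]}\subseteq K^{[*]}$, which is the familiar cone-invariance duality for a self-adjoint operator; the only real work is to transcribe it into the indefinite inner product by means of Lemma~\ref{lem1}.

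First I would establish the forward implication. Assume $P\circ K\subseteq K$ and fix $u\in K^{[*]}$; I must show $[P\circ u,t]\ge 0$ for every $t\in K$. Writing $P\circ u=A^{[\dagger]}\circ(A\circ u)$ and peeling off the factors with the adjoint rule of Lemma~\ref{lem1} (first for $A^{[\dagger]}$, then for $A$), followed by the reverse-order law $(A^{[\dagger]}\circ A)^{[*]}=A^{[*]}\circ(A^{[\dagger]})^{[*]}$ and the self-adjointness $P^{[*]}=P$, I obtain the chain
\begin{align*}
[P\circ u,\,t]=[A\circ u,\,(A^{[\dagger]})^{[*]}\circ t]=[u,\,A^{[*]}\circ(A^{[\dagger]})^{[*]}\circ t]=[u,\,P^{[*]}\circ t]=[u,\,P\circ t].
\end{align*}
Since $P\circ t\in K$ by hypothesis and $u\in K^{[*]}$, the right-hand side is nonnegative, so $P\circ u\in K^{[*]}$, which is precisely $P\circ K^{[*]}\subseteq K^{[*]}$. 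The reverse implication then comes for free by symmetry: applying the forward implication verbatim with the closed convex cone $K^{[*]}$ in the role of $K$ gives $P\circ K^{[*][*]}\subseteq K^{[*][*]}$, and the bipolar identity $K^{[*][*]}=K$ collapses this to $P\circ K\subseteq K$. In effect the whole lemma rests on the identity $[P\circ u,t]=[u,P\circ t]$ together with $K^{[*][*]}=K$, the latter holding for every closed convex cone because $N^{2}=I$ lets one reduce it to the classical Euclidean bipolar theorem.

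The step that needs genuine care, and which I expect to be the main obstacle, is justifying the adjoint identity for the composite $P$. Lemma~\ref{lem1} is proved only under the standing hypothesis $I\circ A=A\circ I$ (equivalently $MA=AN$), and to push it through $A^{[\dagger]}\circ A$ I first need the companion commutation $I\circ A^{[\dagger]}=A^{[\dagger]}\circ I$, that is $NA^{[\dagger]}=A^{[\dagger]}M$. This is not automatic: I would derive it from the listed property $(A\circ I)^{[\dagger]}=I\circ A^{[\dagger]}$ together with its mirror image $(I\circ A)^{[\dagger]}=A^{[\dagger]}\circ I$ and the hypothesis $A\circ I=I\circ A$, after which $P$ inherits $NP=PN$ and the adjoint rule of Lemma~\ref{lem1} genuinely applies to it. Once this bookkeeping is in place the computation above is routine, and the two remaining ingredients, the reverse-order law for $[*]$ and the bipolar identity, are standard facts in this setting.
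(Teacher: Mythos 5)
Your proposal is correct and takes essentially the same route as the paper: both arguments rest on the self-adjointness $(A^{[\dagger]}\circ A)^{[*]}=A^{[\dagger]}\circ A$ (the fourth Moore--Penrose equation) to move the projector across the indefinite form via $[A^{[\dagger]}\circ A\circ x,\,u]=[x,\,A^{[\dagger]}\circ A\circ u]$, which is exactly the paper's computation for the forward implication, the converse being dismissed there as ``similar''. Your explicit treatment of the converse through $K^{[*][*]}=K$ and your verification that the adjoint rule legitimately applies to $A^{[\dagger]}\circ A$ (via $NA^{[\dagger]}=A^{[\dagger]}M$, derived from the standing hypothesis $MA=AN$ and Lemma 2.1(iv)) merely make explicit steps the paper leaves tacit, since the indefinite product does require this commutation for the adjoint identity to hold.
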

\begin{proof}
 Let $A^{[\dagger]}\circ A\circ K\subseteq K$,  $y=A^{[\dagger]}\circ A\circ x$ with $x\in
 K^{[*]}$,
 $u\in K ~\text{and}~u^{1}=A^{[\dagger]}\circ A\circ u\in K$.  Then
 $[y, u]=[A^{[\dagger]}\circ A\circ x, u]=[x, (A^{[\dagger]}\circ A)^{[*]}\circ u]=[x, A^{[\dagger]}\circ A\circ u]=[x, u^{1}]\geq0$.
 This shows that $y\in K^{[*]}$.  Hence $A^{[\dagger]}\circ A\circ K^{[*]}\subseteq K^{[*]}$.

 Similarly one can easily prove the converse part.
\end{proof}
In the next result,   we determine the set $(A\circ I\circ K)^{[*]}$
in the presence of an additional condition.
\begin{thm}\label{dual of Aok}
$(A\circ I\circ K)^{[*]}\subseteq(A^{[\dagger]})^{[*]}\circ I\circ
K^{[*]}+\mathcal{N}((A\circ I)^{[*]})$.
 If $K$ is invariant under $A^{[\dagger]}\circ A$,  then equality holds.
\end{thm}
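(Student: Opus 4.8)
The plan is to mimic the Euclidean argument of Sivakumar \cite{s2gram}: for the inclusion I decompose an arbitrary $u \in (A \circ I \circ K)^{[*]}$ along the $\circ$-selfadjoint projection $A \circ A^{[\dagger]} = P_{\mathcal{R}(A)}$, and I expect this direction to use only the preliminary lemmas (in particular, \emph{not} the invariance hypothesis). For the reverse inclusion I pair a typical element of the right-hand side against $A \circ I \circ r$ with $r \in K$ and show the resulting indefinite inner product is nonnegative; here the invariance of $K$ under $A^{[\dagger]} \circ A$ (equivalently, of $K^{[*]}$, by Lemma \ref{AKdualAK}) enters decisively.

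For the inclusion I would first record two algebraic facts. Since $A \circ A^{[\dagger]}$ is $[*]$-selfadjoint (third Penrose equation) and the $\circ$-adjoint reverses products, $A \circ A^{[\dagger]} = (A \circ A^{[\dagger]})^{[*]} = (A^{[\dagger]})^{[*]} \circ A^{[*]}$. Also, using $I^{[*]} = I$ we get $(A \circ I)^{[*]} = I \circ A^{[*]}$, and since $I \circ I = N$ is the $\circ$-identity, $I \circ (I \circ w) = w$ for every $w$. Now, given $u$, set $v := (A \circ I)^{[*]} \circ u$; by Lemma \ref{lemma2} we have $v \in K^{[*]}$, and writing $w := A^{[*]} \circ u$ we get $v = I \circ w$, hence $I \circ v = w$. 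Decomposing $u = A \circ A^{[\dagger]} \circ u + (u - A \circ A^{[\dagger]} \circ u)$, the first summand equals $(A^{[\dagger]})^{[*]} \circ w = (A^{[\dagger]})^{[*]} \circ I \circ v \in (A^{[\dagger]})^{[*]} \circ I \circ K^{[*]}$. For the second summand I would apply Lemma 2.1(i) in the form $A^{[*]} \circ A \circ A^{[\dagger]} = A^{[*]}$ to obtain $(A \circ I)^{[*]} \circ (u - A \circ A^{[\dagger]} \circ u) = 0$, so this term lies in $\mathcal{N}((A \circ I)^{[*]})$. This yields the stated inclusion.

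For equality, assume $K$ is invariant under $A^{[\dagger]} \circ A$ and take $u = (A^{[\dagger]})^{[*]} \circ I \circ v + z$ with $v \in K^{[*]}$ and $z \in \mathcal{N}((A \circ I)^{[*]})$. For $r \in K$, Lemma \ref{lem1} gives $[z, A \circ I \circ r] = [(A \circ I)^{[*]} \circ z, r] = 0$ and, using $((A^{[\dagger]})^{[*]})^{[*]} = A^{[\dagger]}$, $[(A^{[\dagger]})^{[*]} \circ I \circ v, A \circ I \circ r] = [I \circ v, A^{[\dagger]} \circ A \circ I \circ r]$. The plan is then to slide $I$ past $A^{[\dagger]} \circ A$, invoke invariance to write $A^{[\dagger]} \circ A \circ r =: k \in K$, and finish with the identity $[I \circ v, I \circ k] = [v, k] \ge 0$, valid because $I \circ I = N$ with $N$ symmetric and $N^2 = I$. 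Summing the two contributions gives $[u, A \circ I \circ r] \ge 0$, so $u \in (A \circ I \circ K)^{[*]}$.

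The main obstacle is the commutation $A^{[\dagger]} \circ I = I \circ A^{[\dagger]}$ needed to slide $I$ across $A^{[\dagger]} \circ A$. I would obtain it from Lemma 2.1(iv): taking the $\circ$-adjoint of $(A \circ I)^{[\dagger]} = I \circ A^{[\dagger]}$ and using the standard property $(X^{[\dagger]})^{[*]} = (X^{[*]})^{[\dagger]}$ produces the companion identity $(I \circ B)^{[\dagger]} = B^{[\dagger]} \circ I$ for every $B$. Applying both forms to the hypothesis $A \circ I = I \circ A$ (that is, $MA = AN$) and taking pseudoinverses yields $I \circ A^{[\dagger]} = (A \circ I)^{[\dagger]} = (I \circ A)^{[\dagger]} = A^{[\dagger]} \circ I$. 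With this in hand, $A^{[\dagger]} \circ A \circ I = A^{[\dagger]} \circ I \circ A = I \circ A^{[\dagger]} \circ A$, so $A^{[\dagger]} \circ A \circ I \circ r = I \circ k$, which is exactly what the equality argument requires.
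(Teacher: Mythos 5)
Your proposal is correct and takes essentially the same route as the paper: your explicit splitting $u = A\circ A^{[\dagger]}\circ u + (u - A\circ A^{[\dagger]}\circ u)$ is exactly the decomposition the paper obtains by applying Lemma \ref{lineq} to $(A\circ I)^{[*]}\circ y = z$, since $((A\circ I)^{[*]})^{[\dagger]}\circ z = ((A\circ I)\circ(A\circ I)^{[\dagger]})^{[*]}\circ y = A\circ A^{[\dagger]}\circ y$, and your equality argument (pairing $u^{1}+u^{2}$ against $A\circ I\circ r$ and invoking invariance) is the paper's verbatim. The only difference is that you carefully justify the commutation $A^{[\dagger]}\circ I = I\circ A^{[\dagger]}$ needed to slide $I$ past $A^{[\dagger]}\circ A$, a step the paper uses implicitly when it writes $[(A^{[\dagger]})^{[*]}\circ I\circ l,\, A\circ I\circ t] = [l, t']$.
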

\begin{proof}
 Let $y\in (A\circ I\circ K)^{[*]}$. Then by Lemma $\ref{lemma2}$, $z=(A\circ I)^{[*]}\circ y\in {K}^{[*]}. $
By Lemma \ref{lineq},  $y=((A\circ I)^{[*]})^{[\dagger]}\circ z+w$
for some $w\in \mathcal{N}((A\circ I)^{[*])}. $ Then $y\in ((A\circ
I)^{[*]})^{[\dagger]}\circ K^{[*]}+\mathcal{N}((A\circ I)^{[*]})=
(A^{[\dagger]})^{[*]}\circ I\circ K^{[*]}+\mathcal{N}((A\circ
I)^{[*]})$. This proves the first part.

 Next, suppose that
$A^{[\dagger]}\circ A\circ K\subseteq K$. Let $u=u^{1}+u^{2}$, where
$u^{1}=(A^{[\dagger]})^{[*]}\circ I\circ l$ with $l\in K^{[*]}$ and
$u^{2}\in \mathcal{N}((A\circ I)^{[*]}). $ Let $v=A\circ I\circ t$,
$t\in K$ and set $t^{'}=A^{[\dagger]}\circ A\circ t\in K$. Then $[u,
v]=[u^{1}+u^{2}, v]=[u^{1}, v]+[u^{2}, v]=[u^{1}, A\circ I\circ
t]=[(A^{[\dagger]})^{[*]}\circ I\circ l, A\circ I\circ t]=[l,
t^{'}]\geq 0$, since $[u^{2}, v]=[u^{2}, A\circ I\circ t]=0$. Thus
$u\in (A\circ I\circ K)^{[*]}$.
\end{proof}

\begin{rem}\label{remarkI}
The following example shows that in the absence of the condition
$A^{[\dagger]}\circ A\circ K\subseteq K$, the reverse inclusion may
not hold in Theorem \ref{dual of Aok}.
Let $A = \begin{pmatrix} 1 & 0 & 0\\
                     0 & -1 & 1

      \end{pmatrix}, $
      $M=\begin{pmatrix} 1 &  0\\
                         0 & -1
      \end{pmatrix} $ and
 $N =\begin{pmatrix} 1 & 0 & 0\\
                      0 &-1 & 0\\
                      0 & 0 &-1
      \end{pmatrix}$.
 Then $A^{\dagger} =\displaystyle\frac{1}{2}\begin{pmatrix} 2 & 0 \\
                                        0 & -1 \\
                                        0 & 1
      \end{pmatrix} $ and
  $A^{[\dagger]}=NA^{\dagger}M =\displaystyle\frac{1}{2}\begin{pmatrix} 2 & 0 \\
                                        0 & -1 \\
                                        0 & 1 \\
      \end{pmatrix}$.
  Let $K=\mathbb{R}^{3}_{+}$ then $K^{[*]}=N\mathbb{R}^{3}_{+}$.
  Suppose $x=(1,2,3)^t$. Then $A^{[\dagger]}\circ A\circ x=\displaystyle(1,\frac{-1}{2},\frac{1}{2})\notin K.$
So,  $A^{[\dagger]}\circ A\circ K\nsubseteq K$. Also $A\circ I = \begin{pmatrix} 1 & 0 & 0\\
                     0 & 1 & -1

      \end{pmatrix}$. So $\mathcal{N}((A\circ I)^{[*]})$ contins only the zero
      vector. Let $y=(1,2,0)^t\in K$ then $y^{1}=Ny=(1,-2,0)\in
      K^{[*]}$. Then $u=(A^{[\dagger]})^{[*]}\circ I\circ y^{1}=(1,1)^t\in
      (A^{[\dagger]})^{[*]}\circ I\circ K^{[*]}$. But $u\notin (A\circ I\circ K)^{[*]}$,
      since $[u, A\circ I\circ v]=\langle u, MAv \rangle<0$ for $v=(1,4,8)^t$.
 \end{rem}

The next result is analogous to Theorem \ref{dual of Aok}. This will
be used later.

\begin{lem}\label{analogus}
$ ((A^{[\dagger]})^{[*]}\circ I\circ K^{[*]})^{[*]}\subseteq A\circ
I\circ K+\mathcal{N}((A\circ I)^{[*]})$. If $A^{[\dagger]}\circ
A\circ K\subseteq K$, then equality holds.
\end{lem}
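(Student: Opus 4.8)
The plan is to obtain the lemma as a direct consequence of Theorem~\ref{dual of Aok}, applied not to $A$ but to the matrix $B:=(A^{[\dagger]})^{[*]}$ and the cone $K^{[*]}$. For that pair, Theorem~\ref{dual of Aok} reads
\[
(B\circ I\circ K^{[*]})^{[*]}\subseteq(B^{[\dagger]})^{[*]}\circ I\circ(K^{[*]})^{[*]}+\mathcal{N}((B\circ I)^{[*]}),
\]
with equality whenever $B^{[\dagger]}\circ B\circ K^{[*]}\subseteq K^{[*]}$. The left-hand side is already the set appearing in the lemma, so the entire task is to rewrite the right-hand side and the invariance condition in terms of $A$ and $K$.

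The rewriting is a short dictionary built from the properties of $A^{[\dagger]}$ collected in Section~2, together with $(X^{[*]})^{[\dagger]}=(X^{[\dagger]})^{[*]}$, $(X^{[\dagger]})^{[\dagger]}=X$ and the reversal rule $(P\circ Q)^{[*]}=Q^{[*]}\circ P^{[*]}$. First, $B^{[\dagger]}=((A^{[\dagger]})^{[\dagger]})^{[*]}=A^{[*]}$, hence $(B^{[\dagger]})^{[*]}=A$; combined with the bipolar identity $(K^{[*]})^{[*]}=K$ for the closed cone $K$, the leading term becomes $A\circ I\circ K$. Second, $B^{[\dagger]}\circ B=A^{[*]}\circ(A^{[\dagger]})^{[*]}=(A^{[\dagger]}\circ A)^{[*]}=A^{[\dagger]}\circ A$, using the self-adjointness of $A^{[\dagger]}\circ A$; so the hypothesis $B^{[\dagger]}\circ B\circ K^{[*]}\subseteq K^{[*]}$ is $A^{[\dagger]}\circ A\circ K^{[*]}\subseteq K^{[*]}$, which by Lemma~\ref{AKdualAK} is equivalent to the stated hypothesis $A^{[\dagger]}\circ A\circ K\subseteq K$. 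Third, since $\mathcal{N}((X\circ I)^{[*]})=\mathcal{N}(X^{[*]})$ ($N$ being invertible) and $B^{[*]}=A^{[\dagger]}$, we get $\mathcal{N}((B\circ I)^{[*]})=\mathcal{N}(A^{[\dagger]})=\mathcal{N}(A^{[*]})=\mathcal{N}((A\circ I)^{[*]})$. Substituting these three identities turns the displayed inclusion for $(B,K^{[*]})$ verbatim into the assertion of the lemma, equality included.

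The one point needing genuine care is that Theorem~\ref{dual of Aok} and the auxiliary Lemmas~\ref{lem1} and \ref{lemma2} were proved under the standing assumption $I\circ A=A\circ I$, that is $MA=AN$; to invoke them for $B$ I must first check that $B$ satisfies the same relation $I\circ B=B\circ I$. A direct computation with the weights reduces this to $N A^{[\dagger]}=A^{[\dagger]}M$, and, via the representation $A^{[\dagger]}=N A^{\dagger}M$ (valid because $M^{2}=N^{2}=I$), to the Euclidean identity $A^{\dagger}M=N A^{\dagger}$. I would establish the latter from $MA=AN$ by transposing to $A^{T}M=N A^{T}$ and verifying that $M(A^{T})^{\dagger}N$ satisfies the four Penrose equations defining $(A^{T})^{\dagger}$, whence $M(A^{T})^{\dagger}N=(A^{T})^{\dagger}$, and then transposing back. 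I expect this commuting property of $(A^{[\dagger]})^{[*]}$ to be the only delicate step — indeed it is exactly what already legitimizes the reverse inclusion in Theorem~\ref{dual of Aok} — while everything else is bookkeeping with Section~2. One may equally run the proof of Theorem~\ref{dual of Aok} line by line with $A$ and $(A^{[\dagger]})^{[*]}$, and $K$ and $K^{[*]}$, interchanged; that route requires precisely the same facts.
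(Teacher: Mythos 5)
Your proposal is correct and is exactly the paper's route: the published proof is the one-line statement that the lemma ``follows from Lemma~\ref{AKdualAK} and Theorem~\ref{dual of Aok} by replacing $A$ by $(A^{[\dagger]})^{[*]}$ and $K$ by $K^{[*]}$.'' Your write-up simply supplies the dictionary ($B^{[\dagger]}=A^{[*]}$, $K^{[*][*]}=K$, the null-space identification) and the check that $(A^{[\dagger]})^{[*]}$ inherits the standing relation $MA=AN$ --- a verification the paper silently omits but which is indeed needed to invoke Theorem~\ref{dual of Aok}.
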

\begin{proof}
The proof follows from Lemma \ref{AKdualAK} and Theorem \ref{dual of
Aok} by replacing $A$ by $((A^{[\dagger]})^{[*]}$ and $K$ by
$K^{[*]}$.
\end{proof}
\begin{rem}
Let A be given as in Remark \ref{remarkI}. Then $A^{[\dagger]}\circ
A\circ K\nsubseteq K$. Let $y=(2,5,8)^t\in K $ and set
       $y^{1}=A\circ I\circ y=Ay=(2,3)^t\in A\circ I\circ K$. Let
       $v=N(1,2,0)^t=(1,-2,0)^t\in K^{[*]}$ and
       $z=(A^{[\dagger]})^{[*]}\circ I\circ v=
       (1,1)^t\in(A^{[\dagger]})^{[*]}\circ I\circ K^{[*]}.$ Then
   $[y^{1}, z]=\langle y^{1}, Mz \rangle=\langle (2, 3)^t, (1, -1)^t
   \rangle<0$,
   so that $y^{1}\notin ((A^{[\dagger]})^{[*]}\circ I\circ K^{[*]})^{[*]}$.
   This  shows that the condition $A^{[\dagger]}\circ A\circ K\subseteq K$ is essential for the reverse inclusion to hold in Lemma \ref{analogus}.
\end{rem}
\begin{lem}\label{intersection}
$(A\circ I\circ K)^{[*]}\cap \mathcal{R}(A\circ I)\subseteq
(A^{[\dagger]})^{[*]}oI\circ K^{[*]}$. If $A^{[\dagger]}\circ A\circ
K\subseteq K$, then equality holds in the above inclusion.
\end{lem}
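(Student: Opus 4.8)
The plan is to read off the inclusion from the (unconditional) first half of Theorem~\ref{dual of Aok} and then, under the invariance hypothesis, to recover equality from the second half of that same theorem. I will repeatedly use two identifications of ranges: $\mathcal{R}(A\circ I)=\mathcal{R}(A)$ (since $A\circ I = AN$ with $N$ invertible), and $\mathcal{R}((A^{[\dagger]})^{[*]})=\mathcal{R}(A)$, the latter obtained by applying the property $\mathcal{R}(B^{[*]})=\mathcal{R}(B^{[\dagger]})$ to $B=A^{[\dagger]}$ together with $(A^{[\dagger]})^{[\dagger]}=A$.

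For the inclusion, I would take $y\in(A\circ I\circ K)^{[*]}\cap\mathcal{R}(A\circ I)$. The first membership together with the first assertion of Theorem~\ref{dual of Aok} furnishes a decomposition $y=u_{1}+u_{2}$ with $u_{1}=(A^{[\dagger]})^{[*]}\circ I\circ l$ for some $l\in K^{[*]}$ and $u_{2}\in\mathcal{N}((A\circ I)^{[*]})$. Since $u_{1}\in\mathcal{R}((A^{[\dagger]})^{[*]})=\mathcal{R}(A)=\mathcal{R}(A\circ I)$ and, by the second membership, $y\in\mathcal{R}(A\circ I)$, the vector $u_{2}=y-u_{1}$ lies in $\mathcal{R}(A\circ I)\cap\mathcal{N}((A\circ I)^{[*]})$. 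Here is the one point that needs the indefinite structure: computing $(A\circ I)^{[*]}=N(AN)^{*}M=A^{*}M$ and then $(A\circ I)^{[*]}\circ x=A^{*}M^{2}x=A^{*}x$ (using $M^{2}=N^{2}=I$), one finds $\mathcal{N}((A\circ I)^{[*]})=\mathcal{N}(A^{*})=\mathcal{R}(A)^{\perp}$. Thus the intersection collapses to $\mathcal{R}(A)\cap\mathcal{R}(A)^{\perp}=\{0\}$, forcing $u_{2}=0$ and hence $y=u_{1}\in(A^{[\dagger]})^{[*]}\circ I\circ K^{[*]}$. Equivalently, one may note that $A\circ A^{[\dagger]}=P_{\mathcal{R}(A)}$ has range $\mathcal{R}(A\circ I)$ and null space $\mathcal{N}((A\circ I)^{[*]})$, so the two are complementary.

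For equality, assume $A^{[\dagger]}\circ A\circ K\subseteq K$ and let $w=(A^{[\dagger]})^{[*]}\circ I\circ l$ with $l\in K^{[*]}$. The equality assertion of Theorem~\ref{dual of Aok} gives $(A^{[\dagger]})^{[*]}\circ I\circ K^{[*]}+\mathcal{N}((A\circ I)^{[*]})=(A\circ I\circ K)^{[*]}$; taking the $\mathcal{N}((A\circ I)^{[*]})$-summand to be zero shows $w\in(A\circ I\circ K)^{[*]}$. As $w\in\mathcal{R}((A^{[\dagger]})^{[*]})=\mathcal{R}(A\circ I)$ as well, I obtain $(A^{[\dagger]})^{[*]}\circ I\circ K^{[*]}\subseteq(A\circ I\circ K)^{[*]}\cap\mathcal{R}(A\circ I)$, which with the first part yields equality. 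The only genuine obstacle is the trimming step in the middle paragraph, namely verifying that the null-space summand supplied by Theorem~\ref{dual of Aok} is annihilated by intersecting with $\mathcal{R}(A\circ I)$; the normalization $M^{2}=N^{2}=I$ reduces this to the familiar orthogonal splitting $\mathbb{R}^{m}=\mathcal{R}(A)\oplus\mathcal{R}(A)^{\perp}$, after which everything is bookkeeping.
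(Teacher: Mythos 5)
Your proof is correct, but it takes a genuinely different route from the paper's, at least in the first half. For the forward inclusion the paper argues in one stroke: for $y\in(A\circ I\circ K)^{[*]}\cap\mathcal{R}(A\circ I)$ it writes $y=(A\circ I)\circ(A\circ I)^{[\dagger]}\circ y$, uses self-adjointness of this projector to rewrite $y=((A\circ I)^{[\dagger]})^{[*]}\circ(A\circ I)^{[*]}\circ y$, and then Lemma \ref{lemma2} gives $(A\circ I)^{[*]}\circ y\in K^{[*]}$, so $y\in(A^{[\dagger]})^{[*]}\circ I\circ K^{[*]}$ with no decomposition and no trimming step at all. You instead invoke the unconditional half of Theorem \ref{dual of Aok} and then annihilate the null-space summand via $\mathcal{R}(A\circ I)\cap\mathcal{N}((A\circ I)^{[*]})=\{0\}$; this is sound (your computation $(A\circ I)^{[*]}\circ x=A^{*}x$ from $M^{2}=N^{2}=I$ is exactly right, and the paper itself uses the same complementarity fact, without comment, in the proofs of Lemma \ref{acute} and Lemma \ref{lemma 11}), but it costs you the auxiliary identities $\mathcal{R}((A^{[\dagger]})^{[*]})=\mathcal{R}(A)$ and $(A^{[\dagger]})^{[\dagger]}=A$, the latter nowhere stated in the paper though immediate from the symmetry and uniqueness of the four Penrose equations. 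For the reverse inclusion the difference is one of packaging rather than substance: you cite the equality half of Theorem \ref{dual of Aok} (taking the null-space summand to be zero) and then check range membership, whereas the paper re-runs the underlying computation directly, verifying $[x, A\circ I\circ w]=[u, A^{[\dagger]}\circ A\circ w]=[u,w^{1}]\geq 0$ with $w^{1}=A^{[\dagger]}\circ A\circ w\in K$ supplied by the invariance hypothesis. Your version is more economical in that it exposes the lemma as essentially Theorem \ref{dual of Aok} intersected with $\mathcal{R}(A\circ I)$; the paper's version is self-contained and avoids leaning on the range and null-space bookkeeping. Both arguments establish the lemma.
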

\begin{proof}
Let $y=A\circ I\circ x\in (A\circ I\circ K)^{[*]}$. Then by Lemma
\ref{lemma2},
 $(A\circ I)^{[*]}\circ y\in K^{[*]}.$ Also,  $y=(A\circ I)\circ (A\circ I)^{[\dagger]}\circ y=((A\circ I)\circ (A\circ I)^{[\dagger]})^{[*]}\circ y
      =((A\circ I)^{[\dagger]})^{[*]}\circ (A\circ I)^{[*]}\circ y
      =(A^{[\dagger]})^{[*]}\circ I\circ (A\circ I)^{[*]}\circ y\in(A^{[\dagger]})^{[*]}\circ I\circ K^{[*]}$,
      proving that $(A\circ I\circ K)^{[*]}\cap \mathcal{R}(A\circ I)\subseteq (A^{[\dagger]})^{[*]}\circ I\circ K^{[*]}$.

 Conversely, suppose that $x\in (A^{[\dagger]})^{[*]}\circ I\circ K^{[*]}$. Then $x=((A\circ I)^{[\dagger]})^{[*]}\circ u$ for some $u\in K^{[*]}$.
 This implies $x\in \mathcal{R}(A\circ I)$. Let $w\in K$, $v=A\circ I\circ w\in
 A\circ I\circ K$ and $w^{1}=A^{[\dagger]}\circ A\circ w\in K$. Then we have $[x, v]=[(A^{[\dagger]})^{[*]}\circ I\circ u, A\circ I\circ w]=[u, A^{[\dagger]}\circ A\circ w]=[u,
 w^{1}]\geq0$.
 Thus $x\in (A\circ I\circ K)^{[*]}$.
\end{proof}
\begin{rem}
Let A be given as in Remark \ref{remarkI}. Then $A^{[\dagger]}\circ
A\circ K\nsubseteq K$. Let $y=(2,5,8)^t\in K$, and
       $y^{1}=A\circ I\circ y=Ay=(2,3)^t\in A\circ I\circ K$. Let
       $v=N(1,2,0)^t=(1,-2,0)^t\in K^{[*]}$ and
$z=(A^{[\dagger]})^{[*]}\circ I\circ v=(1,1)^t\in
(A^{[\dagger]})^{[*]}\circ I\circ K^{[*]}$. But
   $[y^{1}, z]=\langle y^{1}, Mz \rangle=\langle (2, 3)^t,(1,-1)^t \rangle<0.$
   Thus $z \notin (A\circ I\circ K)^{[*]}\cap \mathcal{R}(I\circ A)$.
   Hence the condition $A^{[\dagger]}\circ A\circ K\subseteq K$ is necessary for the reverse inclusion to hold in Lemma \ref{intersection}.
\end{rem}

Next, we obtain an equivalent condition for the acuteness of the
cone $(A\circ I\circ K)^{[*]}\cap\mathcal{R}(A\circ I)$.
\begin{lem}\label{acute}
Let $A^{[\dagger]}\circ A\circ K\subseteq K$. Then $(A\circ I\circ
K)^{[*]}\cap\mathcal{R}(A\circ I)$ is acute $\Leftrightarrow(A\circ
I\circ k)^{[*]}\cap\mathcal{R}(A\circ I)\subseteq A\circ I\circ K$.
\end{lem}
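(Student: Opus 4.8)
The plan is to abbreviate $C := (A\circ I\circ K)^{[*]}\cap\mathcal{R}(A\circ I)$ and exploit the two preceding results, both of which apply under the standing hypothesis $A^{[\dagger]}\circ A\circ K\subseteq K$. First I would invoke Lemma \ref{intersection} to write $C$ in closed form as $C=(A^{[\dagger]})^{[*]}\circ I\circ K^{[*]}$, and then Lemma \ref{analogus} to compute its dual, obtaining $C^{[*]}=A\circ I\circ K+\mathcal{N}((A\circ I)^{[*]})$. Since, following Novikoff, acuteness of $C$ means precisely $C\subseteq C^{[*]}$, the claimed equivalence reduces to showing that $C\subseteq A\circ I\circ K+\mathcal{N}((A\circ I)^{[*]})$ holds if and only if $C\subseteq A\circ I\circ K$.

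The reverse direction is immediate: since $A\circ I\circ K\subseteq A\circ I\circ K+\mathcal{N}((A\circ I)^{[*]})=C^{[*]}$, the inclusion $C\subseteq A\circ I\circ K$ already forces $C\subseteq C^{[*]}$, i.e. $C$ is acute.

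For the forward direction, suppose $C$ is acute and take $x\in C$. Then $x\in C^{[*]}$, so $x=a+n$ with $a\in A\circ I\circ K$ and $n\in\mathcal{N}((A\circ I)^{[*]})$. By the very definition of $C$ we have $x\in\mathcal{R}(A\circ I)$, and also $a\in A\circ I\circ K\subseteq\mathcal{R}(A\circ I)$, whence $n=x-a\in\mathcal{R}(A\circ I)$. The crux is then to observe that $\mathcal{R}(A\circ I)\cap\mathcal{N}((A\circ I)^{[*]})=\{0\}$: indeed, by the properties recorded earlier, $(A\circ I)\circ(A\circ I)^{[\dagger]}=P_{\mathcal{R}(A\circ I)}$ is idempotent with range $\mathcal{R}(A\circ I)$ and null space $\mathcal{N}((A\circ I)^{[*]})$, and the range and kernel of any idempotent meet only in $0$. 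Consequently $n=0$, so $x=a\in A\circ I\circ K$, giving $C\subseteq A\circ I\circ K$.

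The only delicate point is this trivial-intersection step; everything else is bookkeeping built on Lemmas \ref{intersection} and \ref{analogus}. I would take care that the decomposition $x=a+n$ is legitimately available (it is, since $x\in C^{[*]}$ and $C^{[*]}$ is exactly the sum computed above) and that the idempotent $P_{\mathcal{R}(A\circ I)}$ has the stated range and null space, both of which are among the listed properties of the Moore--Penrose inverse.
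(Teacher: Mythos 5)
Your proposal is correct, and its forward direction is exactly the paper's argument: both compute, via Lemma \ref{intersection} (equality under the invariance hypothesis) followed by Lemma \ref{analogus}, that $C^{[*]}=A\circ I\circ K+\mathcal{N}((A\circ I)^{[*]})$, then kill the null-space component using $\mathcal{R}(A\circ I)\cap\mathcal{N}((A\circ I)^{[*]})=\{0\}$. You differ from the paper in two smaller places. First, in the converse: you deduce acuteness abstractly from $A\circ I\circ K\subseteq C^{[*]}$, reusing the computed dual, whereas the paper verifies acuteness directly --- for $x,y\in C\subseteq A\circ I\circ K$ it writes $x=A\circ I\circ u$ with $u\in K$, notes $(A\circ I)^{[*]}\circ y\in K^{[*]}$ by Lemma \ref{lemma2}, and concludes $[x,y]=[u,(A\circ I)^{[*]}\circ y]\geq 0$ by Lemma \ref{lem1}. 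The paper's version has the minor advantage of not invoking Lemmas \ref{intersection} and \ref{analogus} (and hence not the hypothesis $A^{[\dagger]}\circ A\circ K\subseteq K$) in that direction, so the implication ``$C\subseteq A\circ I\circ K$ implies $C$ acute'' actually holds unconditionally; your route uses the hypothesis, which is harmless here since it is standing, but hides this extra information. Second, you supply a justification the paper omits: it asserts $\mathcal{R}(A\circ I)\cap\mathcal{N}((A\circ I)^{[*]})=\{0\}$ without proof, while you derive it from the projector $P=(A\circ I)\circ(A\circ I)^{[\dagger]}$. One caution there: idempotency must be read in the indefinite product, $P\circ P=P$, and the range/kernel are those of the $\circ$-action, as in Lemma 2.1(v); with that reading your argument goes through verbatim, since $y=P\circ x$ and $P\circ y=0$ give $y=P\circ P\circ x=P\circ y=0$. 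So the step you flagged as delicate is indeed sound, and your write-up is in fact slightly more complete than the paper's at that point.
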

\begin{proof}
Suppose that $L=(A\circ I\circ K)^{[*]}\cap\mathcal{R}(A\circ I)$ is
acute. Then $L\subseteq L^{[*]}$. By Lemma \ref{analogus} and Lemma
\ref{intersection}, it follows that $L^{[*]}=((A\circ I\circ
K)^{[*]}\cap\mathcal{R}(A\circ I))^{[*]}
       =((A^{[\dagger]})^{[*]}\circ I\circ K^{[*]})^{[*]}=A\circ I\circ K+\mathcal{N}((A\circ I)^{[*]}).$
So, $(A\circ I\circ K)^{[*]}\cap\mathcal{R}(A\circ I)\subseteq
A\circ I\circ K+\mathcal{N}((A\circ I)^{[*]}).$  But, we have to
show that $(A\circ I\circ K)^{[*]}\cap\mathcal{R}(A\circ I)\subseteq
A\circ I\circ K.$ Let $x\in (A\circ I\circ
K)^{[*]}\cap\mathcal{R}(A\circ I)$. Then $x=A\circ I\circ u+z$, with
$u\in K$, $z\in \mathcal{N}((A\circ I)^{[*]})$. But since $x, A\circ
I\circ u\in \mathcal{R}(A\circ I)$, it follows that
 $z\in \mathcal{R}(A\circ I)\cap \mathcal{N}((A\circ I)^{[*]})=\{0\}.$
Thus $x\in A\circ I\circ K$.

 Conversely, let  $x,y\in
(A\circ I\circ K)^{[*]}\cap\mathcal{R}(A\circ I)\subseteq A\circ
I\circ K$. Then $x=A\circ I\circ u,~u\in K.$  We also have $(A\circ
I)^{[*]}\circ y\in K^{[*]}$. Now, $[x,y]=[A\circ I\circ
u,y]=[u,(A\circ I)^{[*]}\circ y]\geq 0.$ Thus $(A\circ I\circ
k)^{[*]}\cap\mathcal{R}(A\circ I)$ is acute.
\end{proof}
We next obtain a necessary and sufficient condition for a cone to be
invariant under $(A^{[*]}\circ A)^{[\dagger]}$ (See Lemma
\ref{nonnegativity}).
\begin{lem}\label{lemma 11}
$(A^{[\dagger]})^{[*]}\circ I\circ K^{[*]}\subseteq A\circ I\circ
K+\mathcal{N}((A\circ I)^{[*]})$ $\Leftrightarrow (A^{[*]}\circ
A)^{[\dagger]}\circ K^{[*]}\subseteq K+\mathcal{N}(A\circ I)$
\end{lem}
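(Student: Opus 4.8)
The plan is to prove the biconditional by showing that the two inclusions are related through the operator identity
\[
(A^{[*]}\circ A)^{[\dagger]} = A^{[\dagger]}\circ (A^{[\dagger]})^{[*]},
\]
which is precisely part (iii) of the first lemma of the excerpt. The strategy is to apply $A^{[\dagger]}$ on the left of the first inclusion to pass to the second, and to apply $A\circ I$ (or $(A^{[\dagger]})^{[*]}$) on the left of the second to recover the first. I would handle the two directions separately, keeping careful track of where the null-space summands are absorbed or created.

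For the forward direction, assume $(A^{[\dagger]})^{[*]}\circ I\circ K^{[*]}\subseteq A\circ I\circ K+\mathcal{N}((A\circ I)^{[*]})$. Let $u\in K^{[*]}$ and set $y=(A^{[*]}\circ A)^{[\dagger]}\circ u$. Using the identity from part (iii), $y = A^{[\dagger]}\circ\big((A^{[\dagger]})^{[*]}\circ u\big)$, and since $(A^{[\dagger]})^{[*]}\circ u$ lies in the left-hand cone, the hypothesis writes $(A^{[\dagger]})^{[*]}\circ I\circ u = A\circ I\circ k + n$ with $k\in K$ and $n\in\mathcal{N}((A\circ I)^{[*]})$. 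Applying $A^{[\dagger]}$ and using $A^{[\dagger]}\circ A\circ I = A^{[\dagger]}\circ A$ together with $\mathcal{N}((A\circ I)^{[*]})=\mathcal{N}(A^{[\dagger]}\circ A)$ (from part (v) of the first lemma, since $\mathcal{N}(A\circ A^{[\dagger]})=\mathcal{N}(A^{[*]})$ and these null spaces coincide under the standing hypothesis $MA=AN$), the null-space term is killed and $y = A^{[\dagger]}\circ A\circ k \in K+\mathcal{N}(A\circ I)$ after adjusting by an element of $\mathcal{N}(A\circ I)$. This yields the second inclusion.

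For the converse, I would run essentially the same computation in reverse: given $(A^{[*]}\circ A)^{[\dagger]}\circ K^{[*]}\subseteq K+\mathcal{N}(A\circ I)$, apply $(A^{[\dagger]})^{[*]}\circ I$ on the left and again invoke part (iii) to reconstitute $(A^{[\dagger]})^{[*]}\circ I\circ K^{[*]}$, noting that $(A^{[\dagger]})^{[*]}\circ I$ maps $\mathcal{N}(A\circ I)$ into $\mathcal{N}((A\circ I)^{[*]})$ so that the null-space summand is carried across correctly. I expect the main obstacle to be the bookkeeping of the null-space summands: verifying that the correct null spaces coincide under the assumption $I\circ A=A\circ I$ (equivalently $MA=AN$), and checking that each projection identity from the first lemma applies with the indefinite product rather than the ordinary one. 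Once the identity $(A^{[*]}\circ A)^{[\dagger]}=A^{[\dagger]}\circ(A^{[\dagger]})^{[*]}$ and the range/null-space relations of part (v) are in hand, the algebra is routine; the care lies entirely in matching the two affine cones term by term.
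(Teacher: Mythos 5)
Your overall strategy---transferring the two inclusions through the identity $(A^{[*]}\circ A)^{[\dagger]}=A^{[\dagger]}\circ(A^{[\dagger]})^{[*]}$---is in spirit the same as the paper's, but your forward direction breaks down on exactly the weight bookkeeping you flagged as the main obstacle. Having written $y=(A^{[*]}\circ A)^{[\dagger]}\circ u=A^{[\dagger]}\circ\bigl((A^{[\dagger]})^{[*]}\circ u\bigr)$, you assert that $(A^{[\dagger]})^{[*]}\circ u$ lies in the hypothesis cone $(A^{[\dagger]})^{[*]}\circ I\circ K^{[*]}$. It does not: since every $\circ$ inserts a weight, $(A^{[\dagger]})^{[*]}\circ u=(A^{[\dagger]})^{[*]}Nu=(A^{[\dagger]})^{[*]}\circ I\circ(Nu)$, and for $u\in K^{[*]}$ the vector $Nu$ lies in the \emph{Euclidean} dual $K^{*}$, not in $K^{[*]}$; so your element sits in $(A^{[\dagger]})^{[*]}\circ I\circ K^{*}$, the wrong cone, off by exactly one factor of $N$. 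The two auxiliary identities you invoke to finish are also broken: $A^{[\dagger]}\circ A\circ I=A^{[\dagger]}\circ A$ is false in general (it amounts to $A^{\dagger}AN=A^{\dagger}A$; the hypothesis $MA=AN$ only gives that $N$ \emph{commutes} with $A^{\dagger}A$), and $\mathcal{N}((A\circ I)^{[*]})=\mathcal{N}(A^{[\dagger]}\circ A)$ is ill-typed, the left side being a subspace of $\mathbb{R}^{m}$ and the right of $\mathbb{R}^{n}$; the fact you actually need is $\mathcal{N}((A\circ I)^{[*]})=\mathcal{N}(A^{[*]})=\mathcal{N}(A^{[\dagger]})$.

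The repair is precisely the device the paper uses: work uniformly with $A\circ I$ rather than $A$. Under the standing hypothesis $MA=AN$ one has $A^{[*]}\circ A=(A\circ I)^{[*]}\circ(A\circ I)$, hence $y=(A\circ I)^{[\dagger]}\circ\bigl((A\circ I)^{[\dagger]}\bigr)^{[*]}\circ u$, and by part (iv) of the preliminary lemma $\bigl((A\circ I)^{[\dagger]}\bigr)^{[*]}=(A^{[\dagger]})^{[*]}\circ I$---now the weights match and the inner factor genuinely lands in the hypothesis cone. The paper moreover sidesteps your cancellation step entirely: it computes $A\circ I\circ y=\bigl((A\circ I)^{[\dagger]}\bigr)^{[*]}\circ u$, decomposes this as $A\circ I\circ v+w$ with $v\in K$, $w\in\mathcal{N}((A\circ I)^{[*]})$, and uses $\mathcal{R}(A\circ I)\cap\mathcal{N}((A\circ I)^{[*]})=\{0\}$ to force $w=0$, so that $y-v\in\mathcal{N}(A\circ I)$ directly. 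Your converse sketch, once rewritten in the $A\circ I$ form, is essentially the paper's (apply $(A\circ I)^{[\dagger]}$ to $y=\bigl((A\circ I)^{[\dagger]}\bigr)^{[*]}\circ x$, obtain $(A^{[*]}\circ A)^{[\dagger]}\circ x=u+v$, and solve the linear equation via Lemma \ref{lineq}); note, though, that your remark about $(A^{[\dagger]})^{[*]}\circ I$ ``carrying'' the null-space summand across is vacuous, since $\mathcal{N}\bigl(((A\circ I)^{[\dagger]})^{[*]}\bigr)=\mathcal{N}(A\circ I)$, so this operator annihilates the summand outright. As written, then, the forward direction has a genuine gap; the systematic substitution of $A\circ I$ for $A$ is not optional bookkeeping but the step that makes the argument valid.
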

\begin{proof}
For $x\in K^{[*]}$, let $y=(A^{[*]}\circ A)^{[\dagger]}\circ
x=((A\circ I)^{[*]}\circ (A\circ I))^{[\dagger]}\circ x$ $= (A\circ
I)^{[\dagger]}\circ \linebreak((A\circ I)^{[\dagger]})^{[*]}\circ
x.$ Then
\begin{align*}
A\circ I\circ y &=(A\circ I)\circ (A\circ I)^{[\dagger]}\circ ((A\circ I)^{[\dagger]})^{[*]}\circ x\\
&=((A\circ I)^{[\dagger]}\circ (A\circ I)\circ (A\circ I)^{[\dagger]})^{[*]}\circ x\\
&=((A\circ I)^{[\dagger]})^{[*]}\circ x \\
&=(A^{[\dagger]})^{[*]}\circ I\circ x\in
(A^{[\dagger]})^{[*]}\circ I\circ K^{[*]}\\
 &\subseteq
A\circ I\circ K+\mathcal{N}((A\circ I)^{[*]})
\end{align*}
Therefore $A\circ I\circ y=A\circ I\circ v+w$, $v\in k$,
$w\in\mathcal{N}((A\circ I)^{[*]})$. So , $A\circ I\circ (y-v)\in
\mathcal{R}(A\circ I)\cap\mathcal{N}((A\circ I)^{[*]})=\{0\}$. Then
$A\circ I\circ (y-v)=0$. This implies, $ y-v=u\in\mathcal{N}(A\circ
I)$. Then $y=u+v$, $v\in K$, $u\in \mathcal{N}(A\circ I).$ This
shows that $(A^{[*]}\circ A)^{[\dagger]}\circ K^{[*]}\subseteq
K+\mathcal{N}(A\circ I)$.

Conversely, let $y=(A^{[\dagger]})^{[*]}\circ I\circ x,~x\in
K^{[*]}.$ Then $y=((A\circ I)^{[\dagger]})^{[*]}\circ x$ and
$(A\circ I)^{[\dagger]}\circ y=(A\circ I)^{[\dagger]}\circ ((A\circ
I)^{[\dagger]})^{[*]}\circ x=((A\circ I)^{[*]}\circ (A\circ
I))^{[\dagger]}\circ x$ $=(A^{[*]}\circ A)^{[\dagger]}\circ x=u+v$,
$u\in K,v\in \mathcal{N}(A\circ I).$ Then $y=((A\circ
I)^{[\dagger]})^{[\dagger]}\circ (u+v)+w$, $w\in \mathcal{N}((A\circ
I)^{[\dagger]})$ Then $y=A\circ I\circ u+w\in A\circ I\circ
K+\mathcal{N}((A\circ I)^{[*]})$.
\end{proof}
We also have stronger one-way implication, given below. The proof
follows from necessity  part of Lemma \ref{lemma 11}.
\begin{lem}\label{lemma12}
$(A^{[\dagger]})^{[*]}\circ I\circ K^{[*]}\subseteq A\circ I\circ K$
$\Rightarrow (A^{[*]}\circ A)^{[\dagger]}\circ K^{[*]}\subseteq
K+\mathcal{N}(A\circ I).$
\end{lem}
\begin{lem}\label{lemma13}
$(A^{[*]}\circ A)^{[\dagger]}\circ K^{[*]}\subseteq
K+\mathcal{N}(A\circ I)$ $\Rightarrow K^{[*]}\cap \mathcal{R}(A\circ
I)^{[*]}\subseteq A^{[*]}\circ A\circ K+\mathcal{N}((A\circ I)$
\end{lem}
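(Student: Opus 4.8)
The plan is to exploit the factorization $A^{[*]}\circ A=(A\circ I)^{[*]}\circ(A\circ I)$ already used in the proof of Lemma \ref{lemma 11}, together with the observation that multiplication by the Gram matrix annihilates $\mathcal{N}(A\circ I)$. Writing $B:=A\circ I$ and $G:=A^{[*]}\circ A=B^{[*]}\circ B$, so that $G$ is self-adjoint with $G\circ G^{[\dagger]}=P_{\mathcal{R}(G)}$, I would first record the range identity $\mathcal{R}(G)=\mathcal{R}((A\circ I)^{[*]})$. The inclusion $\mathcal{R}(G)\subseteq\mathcal{R}(B^{[*]})$ is immediate from $G=B^{[*]}\circ B$; for the reverse inclusion I would apply the identity $B^{[*]}=B^{[*]}\circ B\circ B^{[\dagger]}$ (part (i) of the first preliminary lemma, applied to $B$), which reads $B^{[*]}=G\circ B^{[\dagger]}$ and hence gives $\mathcal{R}(B^{[*]})\subseteq\mathcal{R}(G)$.

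Next I would take an arbitrary $x\in K^{[*]}\cap\mathcal{R}((A\circ I)^{[*]})$. By the range identity this means $x\in\mathcal{R}(G)$, so that $x=G\circ G^{[\dagger]}\circ x$ because $G\circ G^{[\dagger]}=P_{\mathcal{R}(G)}$ fixes every element of $\mathcal{R}(G)$. Setting $w:=G^{[\dagger]}\circ x$ and invoking the hypothesis $(A^{[*]}\circ A)^{[\dagger]}\circ K^{[*]}\subseteq K+\mathcal{N}(A\circ I)$ together with $x\in K^{[*]}$, I obtain $w\in K+\mathcal{N}(A\circ I)$, and therefore $w=k+\eta$ with $k\in K$ and $\eta\in\mathcal{N}(A\circ I)=\mathcal{N}(B)$.

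Finally I would substitute this decomposition back and use associativity of the indefinite product: $x=G\circ w=G\circ k+G\circ\eta$. Since $G=B^{[*]}\circ B$, the second term vanishes, $G\circ\eta=B^{[*]}\circ(B\circ\eta)=B^{[*]}\circ 0=0$, leaving $x=G\circ k=A^{[*]}\circ A\circ k\in A^{[*]}\circ A\circ K$. As $0\in\mathcal{N}(A\circ I)$, this is in fact stronger than the asserted inclusion into $A^{[*]}\circ A\circ K+\mathcal{N}(A\circ I)$, so the lemma follows.

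I expect the only point needing genuine care to be the range identity $\mathcal{R}(G)=\mathcal{R}((A\circ I)^{[*]})$ that legitimizes the recovery $x=G\circ G^{[\dagger]}\circ x$; once that is in place, the annihilation $G\circ\eta=0$ and the remaining manipulations are routine consequences of the factorization $G=B^{[*]}\circ B$ and the definition of $\mathcal{N}(A\circ I)$.
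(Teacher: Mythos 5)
Your proof is correct, and it follows the same core strategy as the paper's --- apply the hypothesis to $G^{[\dagger]}\circ x$ for $x\in K^{[*]}\cap\mathcal{R}((A\circ I)^{[*]})$, multiply back by the Gram matrix $G=A^{[*]}\circ A=(A\circ I)^{[*]}\circ(A\circ I)$, and kill the null-space component via $G\circ\eta=0$ for $\eta\in\mathcal{N}(A\circ I)$ --- but the mechanism for recovering $x$ differs in a way worth noting. The paper writes $y=(A\circ I)^{[*]}\circ x\in K^{[*]}$, applies the hypothesis to get $(A^{[*]}\circ A)^{[\dagger]}\circ y=u+z$, and then invokes its stock Lemma \ref{lineq} on the equation $(A^{[*]}\circ A)^{[\dagger]}\circ Y=u+z$ (using $((A^{[*]}\circ A)^{[\dagger]})^{[\dagger]}=A^{[*]}\circ A$) to recover $y=(A^{[*]}\circ A)\circ(u+z)+w$ with a residual term $w\in\mathcal{N}((A^{[*]}\circ A)^{[\dagger]})$, which it then needs to identify with $\mathcal{N}(A\circ I)$ to land in the asserted sum. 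You instead first establish the range identity $\mathcal{R}(G)=\mathcal{R}((A\circ I)^{[*]})$ via $B^{[*]}=B^{[*]}\circ B\circ B^{[\dagger]}=G\circ B^{[\dagger]}$ (Lemma 2.1(i) applied to $B=A\circ I$), which legitimizes $x=G\circ G^{[\dagger]}\circ x$ directly from the defining equation $G\circ G^{[\dagger]}\circ G=G$; this eliminates the residual term entirely and yields the strictly stronger conclusion $K^{[*]}\cap\mathcal{R}((A\circ I)^{[*]})\subseteq A^{[*]}\circ A\circ K$, with no null-space summand needed. Your route buys two things: it avoids the identification $\mathcal{N}((A^{[*]}\circ A)^{[\dagger]})=\mathcal{N}(A\circ I)$ that the paper uses tacitly (and which requires the MP identities, since in an indefinite space $[v,v]=0$ does not force $v=0$), and it makes explicit that the $w$-term in the paper's decomposition is actually zero --- a fact the paper's argument contains implicitly (its $w$ lies in $\mathcal{R}(G)\cap\mathcal{N}(G^{[\dagger]})=\{0\}$) but never records.
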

\begin{proof}
Let $y=(A\circ I)^{[*]}\circ x\in K^{[*]}$. Then $(A^{[*]}\circ
A)^{[\dagger]}\circ y=u+z$, $u\in K$, $z\in \mathcal{N}(A\circ I).$
From this $y=(A^{[*]}\circ A)\circ (u+z)+w$, $w\in
\mathcal{N}(A^{[*]}\circ A)^{[\dagger]}.$ Since $A^{[*]}\circ
A=(A\circ I)^{[*]}\circ (A\circ I)$ and $z\in \mathcal{N}(A\circ
I)$, we get $y=A^{[*]}\circ A\circ u+w\in A^{[*]}\circ A\circ
K+\mathcal{N}(A\circ I)$
\end{proof}
\begin{lem}\label{nonnegativity}
Suppose that $A^{[\dagger]}\circ A\circ K\subseteq K$. Then
$(A^{[*]}\circ A)^{[\dagger]}\circ K^{[*]}\subseteq
K+\mathcal{N}(A\circ I)\Leftrightarrow (A^{[*]}\circ
A)^{[\dagger]}\circ K^{[*]}\subseteq K$
\end{lem}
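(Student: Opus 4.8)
The plan is to prove the two inclusions separately, with the forward implication being the substantive one. For the direction $(A^{[*]}\circ A)^{[\dagger]}\circ K^{[*]}\subseteq K \Rightarrow (A^{[*]}\circ A)^{[\dagger]}\circ K^{[*]}\subseteq K+\mathcal{N}(A\circ I)$, I would simply observe that $0\in\mathcal{N}(A\circ I)$, so that $K\subseteq K+\mathcal{N}(A\circ I)$; the inclusion is then immediate and requires no hypothesis on $A$ or $K$.

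For the forward implication, I would fix $x\in K^{[*]}$ and set $y=(A^{[*]}\circ A)^{[\dagger]}\circ x$, the goal being to upgrade the hypothesis $y\in K+\mathcal{N}(A\circ I)$ to $y\in K$. The structural fact I would extract first is that $y$ lies in $\mathcal{R}(A^{[*]})$: using the preliminary identity $\mathcal{R}(X^{[\dagger]})=\mathcal{R}(X^{[*]})$ applied to $X=A^{[*]}\circ A$, the self-adjointness $(A^{[*]}\circ A)^{[*]}=A^{[*]}\circ A$, and $\mathcal{R}(A^{[*]}\circ A)=\mathcal{R}(A^{[*]})$, one obtains $\mathcal{R}((A^{[*]}\circ A)^{[\dagger]})=\mathcal{R}(A^{[*]})$. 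Since $A^{[\dagger]}\circ A=P_{\mathcal{R}(A^{[*]})}$ acts as the identity on $\mathcal{R}(A^{[*]})$, this yields $A^{[\dagger]}\circ A\circ y=y$.

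Next I would write $y=v+u$ with $v\in K$ and $u\in\mathcal{N}(A\circ I)$, as permitted by the hypothesis, and apply the projection $A^{[\dagger]}\circ A$ to both sides to get $y=A^{[\dagger]}\circ A\circ v+A^{[\dagger]}\circ A\circ u$. If the second summand vanishes, then $y=A^{[\dagger]}\circ A\circ v$, and the standing invariance assumption $A^{[\dagger]}\circ A\circ K\subseteq K$ forces $y\in K$, finishing the proof. Hence everything reduces to showing $A^{[\dagger]}\circ A\circ u=0$ for $u\in\mathcal{N}(A\circ I)$, i.e. to the inclusion $\mathcal{N}(A\circ I)\subseteq\mathcal{N}(A^{[\dagger]}\circ A)=\mathcal{N}(A)$, where the last equality is the identity $\mathcal{N}(A^{[\dagger]}\circ A)=\mathcal{N}(A)$ recorded among the preliminaries.

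This last inclusion is the step I expect to be the main obstacle, because $\mathcal{N}(A\circ I)$ and $\mathcal{N}(A)$ are built from different products: using $A\circ I=AN$ and $N^{2}=I$ one has $(A\circ I)\circ u=Au$ whereas $A\circ u=ANu$, so the two null spaces are $\ker A$ and $N\ker A$ respectively and do not coincide for a general $A$. Here I would invoke the standing commutation hypothesis $I\circ A=A\circ I$, that is $MA=AN$: if $Au=0$ then $A(Nu)=ANu=MAu=0$, so $\ker A$ is $N$-invariant and $\mathcal{N}(A\circ I)=\ker A=N\ker A=\mathcal{N}(A)$. With this identification the offending term vanishes, the displayed computation closes, and the equivalence follows.
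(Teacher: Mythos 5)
Your proof is correct, and every identity you invoke ($\mathcal{R}(X^{[\dagger]})=\mathcal{R}(X^{[*]})$, the self-adjointness of $A^{[*]}\circ A$ under the reverse-order law, $\mathcal{R}(A^{[*]}\circ A)=\mathcal{R}(A^{[*]})$, $A^{[\dagger]}\circ A=P_{\mathcal{R}(A^{[*]})}$, and $\mathcal{N}(A^{[\dagger]}\circ A)=\mathcal{N}(A)$) is available in the paper's preliminaries; but your route through the forward implication differs genuinely from the paper's. The paper never establishes that the projection fixes $y$; instead it round-trips through $x$: from $y=u+v$ with $u\in K$, $v\in\mathcal{N}(A\circ I)$, it uses the general-solution lemma (Lemma \ref{lineq}) to write $x=(A^{[*]}\circ A)\circ(u+v)+w$, where the $v$-term dies because $\mathcal{N}(A\circ I)=\mathcal{N}(A^{[*]}\circ A)$ via the factorization $A^{[*]}\circ A=(A\circ I)^{[*]}\circ(A\circ I)$, and $w$ dies upon reapplying $(A^{[*]}\circ A)^{[\dagger]}$; the recorded identity $(A^{[*]}\circ A)^{[\dagger]}\circ(A^{[*]}\circ A)=A^{[\dagger]}\circ A$ then gives $y=A^{[\dagger]}\circ A\circ u\in K$ directly. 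You instead stay with $y$: the range argument shows $A^{[\dagger]}\circ A\circ y=y$, and you annihilate the null-space summand by proving $\mathcal{N}(A\circ I)=\ker A=N\ker A=\mathcal{N}(A)$ from $MA=AN$ and $N^{2}=I$. The endgame is identical ($y=A^{[\dagger]}\circ A$ applied to a cone element, then invariance), but the middle machinery is not, and each version buys something: the paper's is shorter because it leans on two already-recorded composite identities, while yours isolates exactly where the standing commutation hypothesis enters --- a point the paper leaves buried, since the factorization $A^{[*]}\circ A=(A\circ I)^{[*]}\circ(A\circ I)$ it relies on (equivalently $NA^{*}A=A^{*}AN$) fails without $MA=AN$, just as your identification of the two null spaces does. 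Your explicit observation that in general $\mathcal{N}(A\circ I)=\ker A$ whereas $\mathcal{N}(A)=N\ker A$, so the hypothesis is doing real work, is a genuine clarification of why the counterexample machinery elsewhere in the paper (Remark \ref{remarkI} and the remark following Lemma \ref{nonnegativity}) requires the conditions it does.
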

\begin{proof}
It is enough to show the necessity part. Let $x\in K^{[*]}$ and
$y=(A^{[*]}\circ A)^{[\dagger]}\circ x.$ Then $(A^{[*]}\circ
A)^{[\dagger]}\circ x=u+v$ where $u\in K$, $v\in \mathcal{N}(A\circ
I)$. This implies $x=(A^{[*]}\circ A)\circ (u+v)+w,~w\in
\mathcal{N}(A\circ I)$, so that $y=(A^{[*]}\circ A)^{[\dagger]}\circ
(A^{[*]}\circ A)\circ u=A^{[\dagger]}\circ A\circ u\in K$.
\end{proof}
\begin{rem}
Let $A$ be as given in Remark \ref{remarkI} and let
$K=\mathbb{R}^{3}_{+}$. Then $K^{[*]}=NK^{*}=N\mathbb{R}^{3}_{+}$,
$A\circ I = \begin{pmatrix} 1 & 0 & 0\\
                     0 & 1 & -1

      \end{pmatrix}$,
$\mathcal{N}(A\circ I)=span\{(0,1,1)^t\}$  and
       $\mathcal{R}(A\circ I)^{[*]}=\{(x,-y,y)^t: x,y\in \mathbb{R}\}$.
       Also,
$ A^{[*]}\circ A=\begin{pmatrix} 1 & 0 & 0\\
                      0 &-1 & 1\\
                      0 & 1 &-1
      \end{pmatrix}$. So,
$(A^{[*]}\circ A)^{\dagger}=\displaystyle\frac{1}{4} \begin{pmatrix} 4 & 0 & 0\\
                      0 &-1 & 1\\
                      0 & 1 &-1
      \end{pmatrix}.$
Let $x^{1}=(x,y,z)^t\in K^*$,
  then $(A^{[*]}\circ A)^{[\dagger]}\circ Nx^{1}=\displaystyle\frac{1}{4}\begin{pmatrix} 4 x\\
                      -y+z \\
                     y-z
       \end{pmatrix}\in (A^{[*]}\circ A)^{[\dagger]}\circ K^{[*]}$.
       \newline
Since
$(4x,-y+z,y-z)^t=(4x,b,c)^t-\left(\displaystyle\frac{b+c}{2}\right)(0,1,1)^t$
where
 $b,c\geq0$ such that $\displaystyle\frac{b-c}{2}=-y+z$,nn  $(4x,-y+z,y-z)^t\in K+\mathcal{N}(A\circ I).$
Thus $(A^{[*]}\circ A)^{[\dagger]}\circ K^{[*]}\subseteq
K+\mathcal{N}(A\circ I)$.
  But for $x^{1}= (1,2,3)\in K^{*}$, $Nx^{1}\in K^{[*]}$
       and $(A^{[*]}\circ A)^{[\dagger]}\circ Nx^{1}=\displaystyle\frac{1}{4}(4,1,-1)^t\notin K$
       Thus $(A^{[*]}\circ A)^{[\dagger]}\circ K^{[*]}\nsubseteq K$. Hence we
       can conclude that in the absence of the condition $A^{[\dagger]}\circ A\circ K\subseteq
       K$, Lemma \ref{nonnegativity} may not be true.
\end{rem}

We are now in a position to prove the main result of this article.
\begin{thm}\label{main}(Main Result)
Let $A\in \mathbb{R}^{m\times n}$ with $\mathcal{R}(A\circ I)$
closed, K be a closed in $\mathbb{R}^{n}$ with $A^{[\dagger]}\circ
A\circ K\subseteq K$. Let $C=A\circ I\circ K$ and
$D=(A^{[\dagger]})^{[*]}\circ I\circ K^{[*]}.$ Then the following
conditions are equivalent:
\newline
(i) $D$ is acute.
\newline
(ii)  $(A^{[*]}\circ A)^{[\dagger]}\circ K^{[*]}\subseteq
K+\mathcal{N}(A\circ I)$.
\newline
(iii)  $C$ is obtuse.
\end{thm}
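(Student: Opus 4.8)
The plan is to read off all three equivalences from the \emph{equality} cases of Lemmas \ref{intersection} and \ref{analogus} together with Lemma \ref{lemma 11}, each of which becomes available precisely because the standing hypothesis $A^{[\dagger]}\circ A\circ K\subseteq K$ is in force. I would split the work into the two equivalences (i)$\Leftrightarrow$(iii) and (i)$\Leftrightarrow$(ii), both of which amount to substituting the known descriptions of $D$ and of $D^{[*]}$ into the definitions of acuteness and obtuseness.

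First I would dispose of (i)$\Leftrightarrow$(iii), which is almost immediate. By Lemma \ref{intersection} the hypothesis $A^{[\dagger]}\circ A\circ K\subseteq K$ upgrades the inclusion there to an equality, giving
\[
D=(A^{[\dagger]})^{[*]}\circ I\circ K^{[*]}=(A\circ I\circ K)^{[*]}\cap\mathcal{R}(A\circ I).
\]
Now $C=A\circ I\circ K$ is obtuse, by definition, exactly when $(A\circ I\circ K)^{[*]}\cap\mathcal{R}(A\circ I)$ is acute; since this cone is literally $D$, obtuseness of $C$ is the same statement as acuteness of $D$, so (i)$\Leftrightarrow$(iii).

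Next I would treat (i)$\Leftrightarrow$(ii) through the Novikoff form of acuteness, $D$ acute $\Leftrightarrow D\subseteq D^{[*]}$. The crux is to compute $D^{[*]}$, and here the equality case of Lemma \ref{analogus} (again powered by $A^{[\dagger]}\circ A\circ K\subseteq K$) gives
\[
D^{[*]}=\big((A^{[\dagger]})^{[*]}\circ I\circ K^{[*]}\big)^{[*]}=A\circ I\circ K+\mathcal{N}((A\circ I)^{[*]}).
\]
Therefore $D$ is acute iff $(A^{[\dagger]})^{[*]}\circ I\circ K^{[*]}\subseteq A\circ I\circ K+\mathcal{N}((A\circ I)^{[*]})$, and by Lemma \ref{lemma 11} this inclusion is equivalent to $(A^{[*]}\circ A)^{[\dagger]}\circ K^{[*]}\subseteq K+\mathcal{N}(A\circ I)$, which is exactly (ii). Chaining these equivalences with the first part closes the cycle and yields the mutual equivalence of (i), (ii), (iii).

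Since the substantive content has been front-loaded into the preparatory lemmas, the theorem itself is mostly bookkeeping: keep track of which dual is being taken and apply $D\subseteq D^{[*]}$ as the working definition of acuteness. The step I would watch most carefully is invoking the \emph{equality} halves (not merely the inclusions) of Lemmas \ref{intersection} and \ref{analogus}; each of these requires the hypothesis $A^{[\dagger]}\circ A\circ K\subseteq K$, and I would cite it explicitly at each use, along with the closedness of $\mathcal{R}(A\circ I)$, which underlies the projection identities $A\circ A^{[\dagger]}=P_{\mathcal{R}(A)}$ and the range/null-space decompositions those lemmas depend on. Everything else is routine once these identifications are in place.
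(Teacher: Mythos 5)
Your proposal is correct, and while your treatment of (i)$\Leftrightarrow$(ii) coincides with the paper's (compute $D^{[*]}=A\circ I\circ K+\mathcal{N}((A\circ I)^{[*]})$ from the equality case of Lemma \ref{analogus}, then translate $D\subseteq D^{[*]}$ through Lemma \ref{lemma 11}), your handling of (iii) is genuinely different and leaner. You read the equality case of Lemma \ref{intersection} as the identification $D=(A\circ I\circ K)^{[*]}\cap\mathcal{R}(A\circ I)$, so that obtuseness of $C$ is, by the paper's own definition, literally the statement that $D$ is acute; this makes (i)$\Leftrightarrow$(iii) immediate and renders Lemmas \ref{acute}, \ref{lemma12} and \ref{lemma13} unnecessary for the theorem. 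The paper instead closes the cycle with two separate implications: (ii)$\Rightarrow$(iii) by invoking Lemma \ref{acute} to reduce obtuseness to the inclusion $C^{[*]}\cap\mathcal{R}(A\circ I)\subseteq C$, which it verifies via Lemma \ref{lemma13} and an explicit Moore--Penrose computation that kills the null-space components using $\mathcal{R}(A\circ I)^{[*]}\cap\mathcal{N}(A\circ I)=\{0\}$ and $\mathcal{R}(A\circ I)\cap\mathcal{N}((A\circ I)^{[*]})=\{0\}$; and (iii)$\Rightarrow$(ii) via Lemma \ref{intersection} followed by Lemma \ref{lemma12}. What each approach buys: yours is more economical and makes transparent that the whole theorem rests on the two equality cases (both powered, as you correctly and explicitly note, by the standing hypothesis $A^{[\dagger]}\circ A\circ K\subseteq K$); the paper's longer detour records along the way the stronger structural fact that under (ii) one actually has $C^{[*]}\cap\mathcal{R}(A\circ I)\subseteq A\circ I\circ K$, i.e.\ $D\subseteq C$, which your shortcut does not exhibit (though it is recoverable from Lemma \ref{acute} once acuteness of $D$ is known). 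Your side remark about closedness of $\mathcal{R}(A\circ I)$ is harmless but inessential in this finite-dimensional setting, where it is automatic.
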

\begin{proof}
$(i)\Rightarrow(ii)$:
\newline
Suppose $D$ is acute then by definition, $D\subseteq D^{[*]}.$ By
Lemma \ref{analogus}, $D^{[*]}=A\circ I\circ K+\mathcal{N}(A\circ
I)^{[*]}.$ Thus $D\subseteq A\circ I\circ K+\mathcal{N}(A\circ
I)^{[*]}.$ Now, by Lemma \ref{lemma 11}, we obtain $(A^{[*]}\circ
A)^{[\dagger]}\circ K^{[*]}\subseteq
K+\mathcal{N}(A\circ I)$.\\
$(ii)\Rightarrow(i)$:
\newline
 Suppose
$(A^{[*]}\circ A)^{[\dagger]}\circ K^{[*]}\subseteq
K+\mathcal{N}(A\circ I).$ By Lemma \ref{lemma 11},  $D\subseteq
A\circ I\circ K+\mathcal{N}((A\circ I)^{[*]})$. But by Lemma
\ref{analogus}, $A\circ I\circ K+\mathcal{N}((A\circ
I)^{[*]})=D^{[*]}.$ So, $D\subseteq D^{[*]}. $ Hence $D$ is acute.
\newline$(ii)\Rightarrow(iii)$ Suppose
$(A^{[*]}\circ A)^{[\dagger]}\circ K^{[*]}\subseteq
K+\mathcal{N}(A\circ I).$ Note that $C=A\circ I\circ K$ is obtuse if
$C^{[*]}\cap\mathcal{R}(A\circ I)$ is acute. By Lemma \ref{acute},
it is enough to show that $C^{[*]}\cap\mathcal{R}(A\circ I)\subseteq
C.$

Let $y\in C^{[*]}\cap\mathcal{R}(A\circ I).$  Then $y=A\circ I\circ
x$ and by Lemma \ref{lemma2},  $(A\circ I)^{[*]}\circ y\in K^{[*]}$.
So, $(A\circ I)^{[*]}\circ y\in K^{[*]}\cap \mathcal{R}(A\circ
I)^{[*]}.$ By Lemma \ref{lemma13}, $(A\circ I)^{[*]}\circ
y=A^{[*]}\circ A\circ u+z$ with $u\in K$, $z\in \mathcal{N}(A\circ
I)$. Since $A^{[*]}\circ A=(A\circ I)^{[*]}\circ (A\circ I)$, it
follows that $(A\circ I)^{[*]}\circ y,A^{[*]}\circ A\circ u\in
\mathcal{R}(A\circ I)^{[*]}$. Thus $z\in \mathcal{R}(A\circ
I)^{[*]}\cap \mathcal{N}(A\circ I)=\{0\}$. This implies $z=0$. Then
$(A\circ I)^{[*]}\circ y=A^{[*]}\circ A\circ u$. From this,
\begin{align*}
y&=((A\circ I)^{[\dagger]})^{[*]}\circ ((A\circ I)^{[*]}\circ A\circ I\circ u)+w\\
&=((A\circ I)\circ (A\circ I)^{[\dagger]})^{[*]}\circ (A\circ I)\circ u+w\\
&= (A\circ I)\circ (A\circ I)^{[\dagger]}\circ (A\circ I)\circ u+w\\
&=(A\circ I)\circ u+w,
 \end{align*}
 where $w\in \mathcal{N}((A\circ I)^{[*]})$.
 \newline Since $y\in
 \mathcal{R}(A\circ I)$, it follows that $w\in \mathcal{R}(A\circ I)\cap
 \mathcal{N}(A\circ I)^{[*]})
 =\{0\}$. Thus $y\in A\circ I\circ K=C$.\\
$(iii)\Rightarrow(ii)$: \newline Let $C=A\circ I\circ K$ be obtuse.
Then by definition,   $C^{[*]}\cap\mathcal{R}(A\circ I)\subseteq C$.
By Lemma \ref{intersection}, $(A^{[\dagger]})^{[*]}\circ I\circ
K^{[*]}\subseteq C$. Now  by Lemma \ref{lemma12}, $(A^{[*]}\circ
A)^{[\dagger]}\circ K^{[*]}\subseteq K+\mathcal{N}(A\circ I). $
\end{proof}
\begin{rem}\hfill
\begin{enumerate}
\item [(i)]
The following exmaple illustrates Thorem \ref{main}.
Let $A=\begin{pmatrix} 1 & 0 & 1\\
                      1 &0 & 1
                      \end{pmatrix}$,
 $M=\begin{pmatrix}  0 & 1\\
                      1 &0 \\
                      \end{pmatrix}$,
  $N=\begin{pmatrix} 0 & 0 & 1\\
                      0 &1 & 0\\
                      1 &0 & 0
                      \end{pmatrix}$ and $K=\mathbb{R}^{3}_{+}$.
Then $A^{\dagger}=\displaystyle\frac{1}{4}\begin{pmatrix} 1&1\\
                        0 & 0\\
                        1 & 1
                        \end{pmatrix}$,
     $A^{[\dagger]}=NA^{\dagger}M=\displaystyle{\frac{1}{4}\begin{pmatrix} 1&1\\
                        0 & 0\\
                        1 & 1
                        \end{pmatrix}}$ and
                        $K^{[*]}=N\mathbb{R}^{3}_{+}$.
Note that for $x^{1}=(x,y,z)^t\in K$, $A^{[\dagger]}\circ A\circ
x^{1}=A^{[\dagger]}Ax^{1}=\displaystyle\frac{1}{2} (x+z,0,x+z)^t\in
K$.
   Thus $A^{[\dagger]}0A\circ K\subseteq K$.
   And
               $(A^{[*]}\circ A)^{\dagger}=\displaystyle{\frac{1}{16}}\begin{pmatrix} 2 & 0 & 2\\
                         0 & 0 & 0\\
                         2 & 0 & 2
             \end{pmatrix}$. Therefore
  $(A^{[*]}\circ A)^{[\dagger]}\circ K^{[*]}=N(A^{[*]}\circ A)^{\dagger}NK^{[*]}\subseteq
  K$. Also one can easily verify that $C=A\circ I\circ K$ is obtuse and
  $D=(A^{[\dagger]})^{[*]}\circ I\circ K^{[*]}$ is acute.
\item [(ii)] Here, we show by an example that in the absense of the condition $A\circ I=I\circ A$, Theorem \ref{main} may not hold.
Let $ A=\begin{pmatrix} 0 & 1\\
                     0 & 1
      \end{pmatrix}$,
        $ M=\begin{pmatrix} 0 & 1\\
                     1 & 0 \\
      \end{pmatrix}=N$. Then clearly $A\circ I\neq I\circ A.$
Let $K=\{(x,0):x\geq0 \}$  then  $K^{*}=\{(x,y):x\geq0,y\in \mathbb{
R}\}$ and $K^{[*]}=\{(y,x):x\geq0,~y\in \mathbb{R}\}$. Also,
$A^{\dagger}=\displaystyle\frac{1}{2}\begin{pmatrix} 0 & 0\\
                     1 & 1 \\
      \end{pmatrix}$ and
$A^{[\dagger]}=\displaystyle\frac{1}{2}\begin{pmatrix} 1 & 1\\
                     0 & 0
      \end{pmatrix}$.
 Clearly $A^{[\dagger]}\circ A\circ K\subseteq K$ and
$D=\left\{(\frac{x}{2},\frac{x}{2}): x\geq 0\right\}$ is acute but
$(A^{[*]}\circ A)^{[\dagger]}\circ K^{[*]}\nsubseteq K$
where $(A^{[*]}\circ A)^{[\dagger]}=\displaystyle\frac{1}{4}\begin{pmatrix} 0 & 2\\
                     0 & 0 \\
      \end{pmatrix}.$
      \item [(iii)] For given $A\in \mathbb{R}^{m\times n}$,  Ramanathan and Sivakumar
\cite{krksgram} derived a set of necessary and sufficient conditions
for a cone to be invariant under  $(A^{[*]}\circ A)^{[\dagger]}$ in
terms of pairwise acuteness of cones $D$ and $I\circ D$ in
indefinite inner product space. We would like to remark here that
pairwise acuteness of $D$ and $I\circ D$ is same as acuteness of the
cone $D$ in usual inner product space.

\end{enumerate}
\end{rem}
\vspace{.5cm} \noindent \textbf{Acknowledgements}: We thank
Prof.K.C. Sivakumar for his valuable comments and suggestions to
improve this article.


\end{document}